\newtheorem{theorem}{Theorem}[section]
\newtheorem{corollary}[theorem]{Corollary}
\newtheorem{lemma}[theorem]{Lemma}
\newtheorem{proposition}[theorem]{Proposition}
\newtheorem{definition}[theorem]{Definition}
\theoremstyle{definition}
\newtheorem{example}[theorem]{Example}
\newtheorem{remark}[theorem]{Remark}
\newcommand{\tsp}{\hspace*{0.1em}} 
\newcommand{\subfiguretitle}[1]{{\scriptsize{#1}} \\[0.25ex]}
\newcommand{\R}{\mathbb{R}}                                     
\newcommand{\innerprod}[2]{\left\langle #1,\, #2 \right\rangle} 
\newcommand{\dd}{\mathrm{d}}                                    
\newcommand{\pp}[1]{\mathbb{#1}}                                
\providecommand{\norm}[1]{\left\lVert #1 \right\rVert}          
\newcommand{\exsum}{\oplus}                                     
\newcommand\restr[2]{{\left.\kern-\nulldelimiterspace #1 \vphantom{\big|} \right|_{#2}}} 
\newcommand\xqed[1]{\leavevmode\unskip\penalty9999 \hbox{}\nobreak\hfill \quad\hbox{#1}}
\newcommand{\exampleSymbol}{\xqed{$\blacktriangle$}} 
\newcommand{\inspace}{\mathbb{X}} 
\newcommand{\outspace}{\mathbb{Y}} 
\newcommand{\id}{\mathcal{I}} 
\newcommand{\idop}{\mathcal{I}} 
\newcommand{\HS}{\text{HS}}         
\newcommand{\ebd}[1][]{
   \ifthenelse{\equal{#1}{}}{\mathcal{E}}{\mathcal{E}_{#1}}}
\newcommand{\pro}[1][]{
   \ifthenelse{\equal{#1}{}}{\mathcal{Q}}{\mathcal{Q}_{#1}}}
\newcommand{\cme}{\mathcal{U}_{\scriptscriptstyle Y \mid X}} 
\newcommand{\ecme}{\widehat{\mathcal{U}}_{\scriptscriptstyle Y \mid X}} 
\newcommand{\pf}[1][]{
   \ifthenelse{\equal{#1}{}}{\mathcal{P}}{\mathcal{P}_{#1}}}
\newcommand{\epf}[1][]{
   \ifthenelse{\equal{#1}{}}{\widehat{\mathcal{P}}}{\widehat{\mathcal{P}}_{#1}}}
\newcommand{\ko}[1][]{
   \ifthenelse{\equal{#1}{}}{\mathcal{K}}{\mathcal{K}_{#1}}}
\newcommand{\eko}[1][]{
   \ifthenelse{\equal{#1}{}}{\widehat{\mathcal{K}}}{\widehat{\mathcal{K}}_{#1}}}
\newcommand{\cov}[1][]{\mathcal{C}_\mathit{\scriptscriptstyle #1}} 
\newcommand{\ecov}[1][]{\widehat{\mathcal{C}}_\mathit{\scriptscriptstyle #1}} 
\newcommand{\gram}[1][]{G_\mathit{\scriptscriptstyle #1}} 
\DeclareMathOperator{\mspan}{span}
\DeclareMathOperator{\diag}{diag}
\DeclareMathOperator{\rank}{rank}
\DeclareMathOperator{\mker}{ker}
\DeclareMathOperator{\mdim}{dim}
\DeclareMathOperator*{\argmin}{arg\,min} 
\title{Singular Value Decomposition of Operators on Reproducing Kernel Hilbert Spaces}
\author[1]{Mattes Mollenhauer}
\author[2]{Ingmar Schuster}
\author[1]{\\Stefan Klus}
\author[1,3]{Christof Sch\"utte}
\affil[1]{\normalsize Department of Mathematics and Computer Science, Freie Universit\"at Berlin, Germany}
\affil[2]{Zalando Research, Zalando SE Berlin, Germany}
\affil[3]{\normalsize Zuse Institute Berlin, Germany}
\date{}
\begin{document}

\maketitle

\begin{abstract}
Reproducing kernel Hilbert spaces (RKHSs) play an important role in many statistics and machine learning applications ranging from support vector machines to Gaussian processes and kernel embeddings of distributions. Operators acting on such spaces are, for instance, required to embed conditional probability distributions in order to implement the kernel Bayes rule and build sequential data models. It was recently shown that transfer operators such as the Perron--Frobenius or Koopman operator can also be approximated in a similar fashion using covariance and cross-covariance operators and that eigenfunctions of these operators can be obtained by solving associated matrix eigenvalue problems. The goal of this paper is to provide a solid functional analytic foundation for the eigenvalue decomposition of RKHS operators and to extend the approach to the singular value decomposition. The results are illustrated with simple guiding examples.
\end{abstract}

\section{Introduction}
\label{sec:Introduction}

A majority of the characterizing properties of a linear map such as range, null space, numerical condition, and different operator norms can be obtained by computing the \emph{singular value decomposition} (SVD) of the associated matrix representation. Furthermore, the SVD is used to optimally approximate matrices under rank constraints, solve least squares problems, or to directly compute the \emph{Moore--Penrose pseudoinverse}. Applications range from solving systems of linear equations and optimization problems and to a wide variety of methods in statistics, machine learning, signal processing, image processing, and other computational disciplines.

Although the matrix SVD can be extended in a natural way to compact operators on Hilbert spaces~\cite{Reed80}, this infinite-dimensional generalization is not as multifaceted as the finite-dimensional case in terms of numerical applications. This is mainly due to the complicated numerical representation of infinite-dimensional operators and the resulting problems concerning the computation of their SVD. As a remedy, one usually considers finite-rank operators based on finite-dimensional subspaces given by a set of fixed basis elements. The SVD of such finite-rank operators will be the main focus of this paper. We will combine the theory of the SVD of finite-rank operators with the concept of reproducing kernel Hilbert spaces, a special class of Hilbert spaces allowing for a high-dimensional representation of the abstract mathematical notion of ``data'' in a feature space. A significant part of the theory of RKHSs was originally developed in a functional analytic setting \cite{aronszajn50reproducing} and made its way into pattern recognition and statistics \cite{Schoe01,Berlinet04:RKHS,StCh08}. RKHSs are often used to derive nonlinear extensions of linear methods by embedding observations into a high-dimensional feature space and rewriting the method in terms of the inner product of the RKHS. This strategy is known as the \emph{kernel trick}~\cite{Schoe01}. The approach of embedding a countable number of observations can be generalized to the embedding of probability distributions associated with random variables into the RKHS \cite{Smola07Hilbert}. The theory of the resulting \emph{kernel mean embedding} (see \cite{MFSS17} for a comprehensive review), \emph{conditional mean embedding}~\cite{SHSF09,Gruen12,Klebanov2019rigorous,Park2020MeasureTheoretic} and \emph{Kernel Bayes rule}~\cite{Fukumizu13:KBR,Fukumizu15:NBI} spawned a wide range of nonparametric approaches to problems in statistics and machine learning. Recent advances based on the conditional mean embedding show that data-driven methods in various fields such as transfer operator theory, time series analysis, and image and text processing naturally give rise to a spectral analysis of finite-rank RKHS operators \cite{KSM19, KHM19}.

Practical applications of these spectral analysis techniques include the identification of the slowest relaxation processes of dynamical systems, e.g., conformational changes of complex molecules or slowly evolving coherent patterns in fluid flows, but also dimensionality reduction and blind source separation. The eigendecomposition, however, is beneficial only in the case where the underlying system is ergodic with respect to some density. If this is not the case, however, i.e., the stochastic process is time-inhomogeneous, eigendecompositions can be replaced by singular value decompositions in order to obtain similar information about the global dynamics~\cite{KoWuNoSch18}. Moreover, outside of the context of stochastic processes, the conditional mean embedding operator has been shown to be the solution of certain vector-valued regression problems~\cite{Gruen12,Park2020MeasureTheoretic}. Contrary to the transfer operator setting, input and output space can differ fundamentally (e.g., the input space could be text) and the constraint that the RKHS for input and output space must be identical is too restrictive. The SVD of RKHS operators does not require this assumption and is hence a more general analysis tool applicable to operators that solve regression problems and to transfer operators associated with more general stochastic processes.

In this paper, we will combine the functional analytic background of the Hilbert space operator SVD and
the theory of RKHSs to develop a self-contained and rigorous mathematical framework for the SVD of finite-rank operators acting on RKHSs and show that the SVD of such operators can be computed numerically by solving an auxiliary matrix eigenvalue problem. The remainder of the paper is structured as follows: Section~\ref{sec:Preliminaries} briefly recapitulates the theory of compact operators. In Section~\ref{sec:Decompositions of RKHS operators}, RKHS operators and their eigendecompositions and singular value decompositions will be described. Potential applications are discussed in Section~\ref{sec:Applications}, followed by a brief conclusion and a delineation of open problems in Section~\ref{sec:Conclusion}.

\section{Preliminaries}
\label{sec:Preliminaries}

In what follows, let $ H $ be a real Hilbert space, $ \innerprod{\cdot}{\cdot}_H $ its inner product, and $ \norm{\cdot}_H $ the induced norm. For a Hilbert space $ H $, we call a set $ \{ h_i \}_{i \in I} \subseteq H $ with an index set $ I $ an \emph{orthonormal system} if $ \innerprod{h_i}{h_j}_H = \delta_{ij} $ for all $ i,j \in I $. If additionally $ \mspan\{ h_i \}_{i \in I} $ is dense in $ H $, then we call $ \{ h_i \}_{i \in I} $ a \emph{complete orthonormal system}. If $ H $ is separable, then the index set $ I $ of every complete orthonormal system of $ H $ is countable. Given a complete orthonormal system, every $ x \in H $ can be expressed by the series expansion $ x = \sum_{i \in I} \innerprod{h_i}{x}_H h_i $.

\begin{definition} \label{defi:tensor_product}
Given two Hilbert spaces $ H $ and $ F $ and nonzero elements $ x \in H $ and $ y \in F $, we define the \emph{tensor product operator} $ y \otimes x \colon H \to F $ by $ (y \otimes x) \tsp h = \innerprod{x}{h}_H \tsp y $.
\end{definition}

Note that tensor product operators are bounded linear operators. Boundedness follows from the Cauchy--Schwarz inequality on $ H $. We define $ \mathcal{E} := \mspan \{ y \otimes x \mid x \in H, \, y \in F\} $ and call the completion of $ \mathcal{E} $ with respect to the inner product
\begin{equation*} \label{eq:hs_innerproduct}
    \innerprod{y_1 \otimes x_1}{y_2 \otimes x_2}:= \innerprod{y_1}{y_2}_F \innerprod{x_1}{x_2}_H
\end{equation*}
the \emph{tensor product} of the spaces $ F $ and $ H $, denoted by $ F \otimes H $. It follows that $ F \otimes H $ is again a Hilbert space. It is well known that, given a self-adjoint compact operator $ A \colon H \to H $, there exists an \emph{eigendecomposition} of the form
\begin{equation*}
    A = \sum\limits_{i \in I} \lambda_i (e_i \otimes e_i),
\end{equation*}
where $ I $ is either a finite or countably infinite ordered index set, $ \{e_i\}_{i \in I} \subseteq H $ an orthonormal system, and $ \{\lambda_i\}_{i \in I} \subseteq \R \setminus\{0\} $ the set of nonzero eigenvalues. If the index set $ I $ is not finite, then the resulting sequence $ (\lambda_i)_{i \in I} $ is a null sequence. Similarly, given a compact bounded operator $ A \colon H \to F $, there exists a \emph{singular value decomposition} given by
\begin{equation*}
    A = \sum\limits_{i \in I} \sigma_i (u_i \otimes v_i),
\end{equation*}
where $ I $ is again an either finite or countably infinite ordered index set, $ \{v_i\}_{i \in I} \subseteq H $ and $ \{u_i\}_{i \in I} \subseteq F $ two orthonormal systems, and $ \{\sigma_i\}_{i \in I} \subseteq \R_{>0} $ the set of singular values. As for the eigendecomposition, the sequence $ (\sigma_i)_{i \in I} $ is a null sequence if $ I $ is not finite. Without loss of generality, we assume the singular values of compact operators to be ordered in non-increasing order, i.e., $ \sigma_i \geq \sigma_{i+1} $. We additionally write $ \sigma_i(A) $ for the $ i $th singular value of a compact operator $ A $ if we want to emphasize to which operator we refer.
The following result shows the connection of the eigendecomposition and the SVD of compact operators.

\begin{lemma} \label{lem:eigendecomposition_svd}
Let $ A \colon H \to F $ be compact and let $ \{\lambda_i\}_{i \in I} $ denote the set of nonzero eigenvalues of $ A^*A $ counted with their multiplicities and $ \{v_i\}_{i \in I} $ the corresponding normalized eigenfunctions of $ A^*A $, then, for $ u_i := \lambda_i^{-1/2} A \tsp v_i $, the singular value decomposition of $ A $ is given by
\begin{equation*}
    A = \sum_{i \in I} \lambda_i^{1/2} \tsp (u_i \otimes v_i).
\end{equation*}
\end{lemma}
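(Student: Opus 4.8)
The plan is to invoke the spectral theorem for self-adjoint compact operators (quoted above) on $A^*A$ and then transport the resulting eigensystem to $F$ through $A$. First I would note that $A^*A \colon H \to H$ is compact, being the composition of a bounded and a compact operator, is self-adjoint, and is positive semidefinite since $\innerprod{A^*A\,x}{x}_H = \norm{A x}_F^2 \geq 0$ for all $x \in H$. Hence the eigendecomposition applies: the nonzero eigenvalues $\{\lambda_i\}_{i \in I}$ are strictly positive, the normalized eigenfunctions $\{v_i\}_{i \in I}$ form an orthonormal system, and $\mspan\{v_i\}_{i \in I}$ is dense in $(\mker A^*A)^\perp$. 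From $A^*A\,x = 0 \iff \norm{A x}_F = 0$ one gets $\mker A = \mker A^*A$, so the $v_i$ span a dense subspace of $(\mker A)^\perp$.

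Second, with $u_i := \lambda_i^{-1/2} A\,v_i$ I would check that $\{u_i\}_{i \in I}$ is an orthonormal system in $F$, which is immediate from $\innerprod{u_i}{u_j}_F = \lambda_i^{-1/2}\lambda_j^{-1/2}\innerprod{A^*A\,v_i}{v_j}_H = \lambda_i^{-1/2}\lambda_j^{-1/2}\lambda_i\,\delta_{ij} = \delta_{ij}$. Third, I would verify the claimed identity pointwise: for arbitrary $x \in H$, decompose $x = x_0 + \sum_{i \in I}\innerprod{v_i}{x}_H v_i$ with $x_0 \in \mker A$, apply $A$, pass it through the convergent series using continuity of $A$, and use $A x_0 = 0$ together with $A v_i = \lambda_i^{1/2} u_i$ to obtain $A x = \sum_{i \in I}\lambda_i^{1/2}\innerprod{v_i}{x}_H u_i = \bigl(\sum_{i \in I}\lambda_i^{1/2}(u_i \otimes v_i)\bigr) x$. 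Finally I would observe that the $\sigma_i := \lambda_i^{1/2}$ inherit the non-increasing ordering of the $\lambda_i$ and, when $I$ is infinite, form a null sequence because $(\lambda_i)_{i \in I}$ does, so the resulting expansion is genuinely a singular value decomposition in the sense defined above.

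The orthonormality check and the algebra are routine; the one point that deserves care is the structural fact $\overline{\mspan\{v_i\}_{i \in I}} = (\mker A)^\perp$, i.e.\ that the eigenfunctions of $A^*A$ for nonzero eigenvalues together with $\mker A$ exhaust $H$. This is exactly where the identity $\mker A = \mker A^*A$ is used, and it is what justifies applying $A$ term by term to the expansion of $x$; everything else follows from the already-quoted spectral theorem.
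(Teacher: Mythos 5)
Your proof is correct: the paper states Lemma~\ref{lem:eigendecomposition_svd} as a classical fact about compact operators and gives no proof of its own, and your argument is the standard one that such a proof would consist of. All the essential points are in place --- positivity and compactness of $A^*A$, orthonormality of the $u_i$ via $\innerprod{A v_i}{A v_j}_F = \innerprod{A^*A\, v_i}{v_j}_H = \lambda_i \delta_{ij}$, the identity $\mker A = \mker A^*A$ guaranteeing that $\overline{\mspan\{v_i\}_{i \in I}} = (\mker A)^\perp$, and the term-by-term application of $A$ to the expansion of an arbitrary $x \in H$ --- so nothing further is needed.
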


A bounded operator $ A \colon H \to F $ is said to be $ r $-dimensional if $ \rank(A) = r $. If $ r < \infty $, we say that $ A $ is \emph{finite-rank}.

\begin{theorem}[see \cite{Weidmann}] \label{theo:finite_rank_operators}
Let $ H $ and $ F $ be two Hilbert spaces and $ A \colon H \to F $ a linear operator. The operator $ A $ is finite-rank with $ \rank(A) = r $ if and only if there exist linearly independent sets $ \{ h_i \}_{1 \leq i \leq r} \subseteq H $ and $ \{ f_i \}_{1 \leq i \leq r} \subseteq F $ such that $ A = \sum_{i=1}^r {f_i \otimes h_i} $. Furthermore, then $ A^* = \sum_{i=1}^r h_i \otimes f_i $.
\end{theorem}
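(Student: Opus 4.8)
The plan is to prove the two implications separately and then verify the adjoint formula directly; throughout I use that a finite-rank operator is bounded (as in the definition preceding the theorem), which is what makes the Riesz representation theorem applicable.

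For the forward direction, assume $\rank(A) = r$. Then $\mim(A)$ is $r$-dimensional, hence closed, so it carries an orthonormal basis $\{f_1,\dots,f_r\}$. I would expand each $Ah$ in this basis, $Ah = \sum_{i=1}^r \innerprod{f_i}{Ah}_F f_i$, observe that each coefficient functional $h \mapsto \innerprod{f_i}{Ah}_F$ is bounded and linear, and invoke Riesz to write it as $\innerprod{h_i}{h}_H$ for a unique $h_i \in H$. Unwinding Definition~\ref{defi:tensor_product} then gives $A = \sum_{i=1}^r f_i \otimes h_i$. The one point that needs care is linear independence of the $\{h_i\}$: here I would factor $A = \iota \circ T$, where $Th = (\innerprod{h_i}{h}_H)_{i=1}^r \in \R^r$ and $\iota(c) = \sum_i c_i f_i$ is injective because the $f_i$ are independent, so that $\rank(A) = \mdim \mim(T)$; since $\mim(T)^\perp = \{c : \sum_i c_i h_i = 0\}$, the hypothesis $\rank(A) = r$ forces this relation space to be trivial.

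For the converse, if $A = \sum_{i=1}^r f_i \otimes h_i$ with both sets independent, then $A$ is a finite sum of bounded rank-one operators, so it is bounded with $\mim(A) \subseteq \mspan\{f_i\}$, giving $\rank(A) \le r$; the same factorization $A = \iota\circ T$ (again with $\iota$ injective) combined with triviality of the relation space $\{c : \sum_i c_i h_i = 0\}$ upgrades this to $\rank(A) = r$. The adjoint formula is then a one-line check: expanding $\innerprod{f}{Ah}_F$ and $\innerprod{(\sum_i h_i\otimes f_i)f}{h}_H$ via Definition~\ref{defi:tensor_product} both yield $\sum_{i=1}^r \innerprod{f_i}{f}_F \innerprod{h_i}{h}_H$, so by uniqueness of the adjoint $A^* = \sum_{i=1}^r h_i \otimes f_i$ (and, symmetrically, $\rank(A^*) = r$ is consistent with the independence of the two sets).

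The main obstacle — really the only subtlety — is the linear-independence bookkeeping in both directions: showing that $\rank(A)$ equals $\mdim\mspan\{h_i\}$ and hence that the stated independence conditions are exactly the right ones. The factorization through $\R^r$ is what makes this transparent; the rest is Riesz representation and unwinding the tensor-product notation.
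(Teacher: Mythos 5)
Your proof is correct, and there is nothing in the paper to compare it against: Theorem~\ref{theo:finite_rank_operators} is stated without proof, with a citation to Weidmann. Your argument is the standard one and is complete --- expanding $Ah$ in an orthonormal basis of the $r$-dimensional range, using Riesz representation on the (bounded) coefficient functionals $h \mapsto \innerprod{f_i}{Ah}_F$, and handling the only delicate point, the linear independence of the $\{h_i\}$ and the equality $\rank(A)=r$ in the converse, via the factorization $A = \iota \circ T$ through $\R^r$ with $\mim(T)^\perp = \{c : \sum_i c_i h_i = 0\}$. You were also right to flag that the Riesz step needs $A$ bounded; this is consistent with the paper's convention, stated just before the theorem, that a finite-rank operator is by definition a bounded operator of finite rank. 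The verification of $A^* = \sum_{i=1}^r h_i \otimes f_i$ by matching $\innerprod{f}{Ah}_F$ with $\innerprod{\bigl(\sum_i h_i \otimes f_i\bigr)f}{h}_H$ is exactly as it should be.
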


The class of finite-rank operators is a dense subset of the class of compact operators with respect to the operator norm.

\begin{definition} \label{defi:HS}
Let $ H $ and $ F $ be Hilbert spaces and $ \{h_i\}_{i \in I} \subseteq H $ be a complete orthonormal system. An operator $ A \colon H \to F $ is called a \emph{Hilbert--Schmidt operator} if $ \sum_{i \in I} \norm{A \tsp h_i}^2_F < \infty $.
\end{definition}

The space of Hilbert--Schmidt operators from $H$ to $F$ is itself a Hilbert space with the inner product $\innerprod{A}{B}_\HS:=  \sum_{i \in I} \innerprod{A \tsp h_i}{B \tsp h_i}_F$. Furthermore, it is isomorphic to the tensor product space $ F \otimes H $. The space of finite-rank operators is a dense subset of the Hilbert--Schmidt operators with respect to the Hilbert--Schmidt norm. Furthermore, every Hilbert--Schmidt operator is compact and therefore admits an SVD.

\begin{remark}
Based on the definitions of the operator norm and the Hilbert--Schmidt norm, we have $ \norm{A} = \sigma_1(A) $ for any compact operator and $ \norm{A}_\HS = \big(\sum_{i \in I} \sigma_i(A)^2 \big)^{1/2}$ for any Hilbert--Schmidt operator.
\end{remark}

We will now derive an alternative characterization of the SVD of compact operators by generalizing a classical block-matrix decomposition approach to compact operators. For the matrix version of this result, we refer the reader to~\cite{Golub13}. For two Hilbert spaces $H$ and $F$, we define the \emph{external direct sum} $ F \exsum H $ as the Hilbert space of tuples of the form $ (f, h) $, where $h \in H $ and $ f \in F$, with the inner product
\begin{equation*}
    \innerprod{(f,h)}{(f',h')}_{\exsum}:=\innerprod{h}{h'}_H + \innerprod{f}{f'}_F.
\end{equation*}
If $A \colon H \to F$ is a compact operator, then the operator $ T \colon F \exsum H \rightarrow F \exsum H $, with
\begin{equation} \label{eq:blockoperator}
  (f, h) \mapsto (A h,A^*f)
\end{equation}
is compact and self-adjoint
with respect to $\innerprod{\cdot}{\cdot}_{\exsum}$.
By interpreting the elements of $F \exsum H$ as column vectors
and generalizing algebraic matrix operations, we may rewrite the action
of the operator $ T $ on $ (f, h) $ in a block operator notation as
\begin{equation*}
  \begin{bmatrix}
    & A \\
    A^* &
  \end{bmatrix}
  \begin{bmatrix}
    f \\ h
  \end{bmatrix} =
  \begin{bmatrix}
    A h \\ A^* f
  \end{bmatrix}.
\end{equation*}
We remark that the block operator notation should be applied with caution since
vector space operations amongst $h \in H$ and $f \in F$ in terms of the matrix multiplication are only defined
if $F \exsum H$ is an internal direct sum.

\begin{lemma} \label{lem:block_operator_svd}
Let $A \colon H \rightarrow F$ be a compact operator and $T \colon F \exsum H \rightarrow F \exsum H$ be the block-operator given by \eqref{eq:blockoperator}. If $A$ admits the
SVD
\begin{align} \label{eq:block_svd}
    A = \sum\limits_{i \in I} \sigma_i (u_i \otimes v_i)
\end{align}
then $T$ admits the eigendecomposition
\begin{align} \label{eq:block_eigendecomposition}
    T = \sum\limits_{i \in I}
        \sigma_i \left[ \tfrac{1}{\sqrt{2}} (u_i,v_i) \otimes  \tfrac{1}{\sqrt{2}}(u_i,v_i) \right]
        - \sigma_i \left[ \tfrac{1}{\sqrt{2}}(-u_i,v_i) \otimes  \tfrac{1}{\sqrt{2}}(-u_i,v_i) \right].
\end{align}
\end{lemma}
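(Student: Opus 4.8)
The plan is to verify directly that the right-hand side of \eqref{eq:block_eigendecomposition} is an eigendecomposition of $T$, i.e.\ an orthonormal system of eigenvectors paired with the corresponding eigenvalues, together with the fact that the resulting operator coincides with $T$.

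First I would abbreviate $w_i^{+} := \tfrac{1}{\sqrt 2}(u_i, v_i)$ and $w_i^{-} := \tfrac{1}{\sqrt 2}(-u_i, v_i)$ and check that $\{w_i^{+}\}_{i \in I} \cup \{w_i^{-}\}_{i \in I}$ is an orthonormal system in $F \exsum H$. This is immediate from the definition of $\innerprod{\cdot}{\cdot}_{\exsum}$ and the orthonormality of $\{u_i\}_{i\in I} \subseteq F$ and $\{v_i\}_{i\in I} \subseteq H$: one obtains $\innerprod{w_i^{\pm}}{w_j^{\pm}}_{\exsum} = \tfrac12(\delta_{ij}+\delta_{ij}) = \delta_{ij}$ and $\innerprod{w_i^{+}}{w_j^{-}}_{\exsum} = \tfrac12(-\delta_{ij}+\delta_{ij}) = 0$.

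Next, from the SVD \eqref{eq:block_svd} a direct computation gives $A v_i = \sigma_i u_i$ for every $i \in I$, and, using the adjoint formula $A^{*} = \sum_{i \in I} \sigma_i (v_i \otimes u_i)$ (Theorem~\ref{theo:finite_rank_operators} in the finite-rank case, and its standard extension to compact operators otherwise), also $A^{*} u_i = \sigma_i v_i$. Substituting into the definition \eqref{eq:blockoperator} of $T$ then yields
\begin{equation*}
  T w_i^{+} = \tfrac{1}{\sqrt 2}(A v_i, A^{*} u_i) = \sigma_i\, \tfrac{1}{\sqrt 2}(u_i, v_i) = \sigma_i w_i^{+},
  \qquad
  T w_i^{-} = \tfrac{1}{\sqrt 2}(A v_i, -A^{*} u_i) = -\sigma_i\, \tfrac{1}{\sqrt 2}(-u_i, v_i) = -\sigma_i w_i^{-},
\end{equation*}
so each $w_i^{+}$ is an eigenvector of $T$ for the eigenvalue $\sigma_i > 0$ and each $w_i^{-}$ is an eigenvector for the eigenvalue $-\sigma_i < 0$.

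It remains to show that the operator $S$ defined by the right-hand side of \eqref{eq:block_eigendecomposition} actually equals $T$; this is the step requiring the most care. Since $(\sigma_i)_{i\in I}$ is a null sequence when $I$ is infinite and $\{w_i^{\pm}\}_{i\in I}$ is orthonormal, $S$ is a well-defined bounded (indeed compact, self-adjoint) operator, and orthonormality gives $S w_i^{\pm} = \pm\sigma_i w_i^{\pm}$; hence $S$ and $T$ agree on $\mspan\{w_i^{\pm} : i \in I\}$ and, by continuity, on its closure $M$. Because $\mspan\{w_i^{+}, w_i^{-}\} = \mspan\{(u_i,0), (0,v_i)\}$, the orthogonal complement is $M^{\perp} = \{(f,h) : f \perp u_i \text{ and } h \perp v_i \text{ for all } i \in I\}$; for such $(f,h)$ we have $A h = \sum_{i} \sigma_i \innerprod{v_i}{h}_H u_i = 0$ and $A^{*} f = \sum_{i} \sigma_i \innerprod{u_i}{f}_F v_i = 0$, so $T(f,h) = 0$, while $S(f,h) = 0$ trivially. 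Therefore $T = S$ on $M \oplus M^{\perp} = F \exsum H$, which is exactly \eqref{eq:block_eigendecomposition}. The only genuine obstacle is this completeness argument — ensuring the $w_i^{\pm}$ capture all of the action of $T$ — which is resolved by the orthogonal decomposition together with the observation that vectors orthogonal to every $v_i$ (respectively every $u_i$) lie in $\mker A$ (respectively $\mker A^{*}$).
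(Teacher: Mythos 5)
Your proposal is correct and follows essentially the same route as the paper's proof: verify that $\tfrac{1}{\sqrt 2}(\pm u_i, v_i)$ are orthonormal eigenvectors of $T$ for the eigenvalues $\pm\sigma_i$, and show that $T$ vanishes on the orthogonal complement of their span because vectors there are orthogonal to all $u_i$ and all $v_i$. Your explicit identification of the span with $\mspan\{(u_i,0),(0,v_i)\}$ and the $M \oplus M^{\perp}$ bookkeeping merely spell out what the paper's appendix argument does implicitly.
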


A proof of this lemma can be found in Appendix~\ref{ap:block_operator_svd}.

\begin{corollary} \label{cor:block_operator_svd}
Let $A \colon H \rightarrow F$ be a compact operator. If $\sigma > 0$ is an eigenvalue of the block-operator $T \colon F \exsum H \rightarrow F \exsum H$ given by \eqref{eq:blockoperator} with the corresponding eigenvector $(u,v) \in F \otimes H$, then $\sigma$ is a singular value of $A$ with the corresponding left and right singular vectors $\norm{u}^{-1}_F u \in F$ and $\norm{v}_H^{-1}v \in H$.
\end{corollary}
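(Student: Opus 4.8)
The plan is to convert the eigenvalue equation for the block operator into the defining relations of a singular triple of $A$ and then appeal to Lemma~\ref{lem:eigendecomposition_svd}. Write the eigenvector as $(u,v) \in F \exsum H$. By the definition \eqref{eq:blockoperator} of $T$, the equation $T(u,v) = \sigma(u,v)$ is equivalent to the pair of relations $A v = \sigma u$ and $A^* u = \sigma v$. Applying $A^*$ to the first (or substituting the second) yields $A^*A v = \sigma A^* u = \sigma^2 v$, so $v$ is an eigenvector of the self-adjoint compact operator $A^*A$ for the eigenvalue $\sigma^2 > 0$. Since $\sigma^2 \neq 0$ we have $v \neq 0$, so $\norm{v}_H^{-1} v$ is well defined and is a normalized eigenfunction of $A^*A$ for the nonzero eigenvalue $\sigma^2$.

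Next I would invoke Lemma~\ref{lem:eigendecomposition_svd}: because $\sigma^2$ is a nonzero eigenvalue of $A^*A$, the number $\sigma = (\sigma^2)^{1/2}$ is a singular value of $A$, with $\norm{v}_H^{-1} v$ the corresponding right singular vector and $\sigma^{-1} A\bigl(\norm{v}_H^{-1} v\bigr)$ the corresponding left singular vector. It then only remains to rewrite this left singular vector in the form stated. Using $A v = \sigma u$ one gets $\sigma^{-1} A\bigl(\norm{v}_H^{-1} v\bigr) = \norm{v}_H^{-1} u$, so it suffices to establish $\norm{u}_F = \norm{v}_H$. This follows from the short computation $\norm{A v}_F^2 = \innerprod{A^*A v}{v}_H = \sigma^2 \norm{v}_H^2$, which gives $\norm{u}_F = \sigma^{-1}\norm{A v}_F = \norm{v}_H$; in particular $u \neq 0$, so $\norm{u}_F^{-1} u$ is also legitimate. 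Combining, the left singular vector equals $\norm{u}_F^{-1} u$ and the right singular vector equals $\norm{v}_H^{-1} v$, as claimed.

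I do not anticipate a genuine obstacle. The only points needing care are checking that the two normalizations make sense (guaranteed by $\sigma \neq 0$, which forces $u,v \neq 0$) and noting that this route handles eigenvalues of arbitrary multiplicity for free: instead of trying to match $(u,v)$ against the canonical eigenvectors $\tfrac{1}{\sqrt 2}(\pm u_i, v_i)$ of Lemma~\ref{lem:block_operator_svd}, passing through $A^*A$ turns \emph{any} eigenvector of $T$ at $\sigma$ directly into a valid singular triple. One could alternatively extract the result from the eigendecomposition \eqref{eq:block_eigendecomposition}, but that is more delicate precisely because of multiplicities, so the $A^*A$ argument is the cleaner choice.
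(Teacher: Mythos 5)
Your argument is correct, but it follows a different route from the paper's. The paper treats this corollary as a consequence of Lemma~\ref{lem:block_operator_svd}: starting from an SVD $A=\sum_i \sigma_i (u_i\otimes v_i)$, the appendix proof shows $T$ has the eigendecomposition \eqref{eq:block_eigendecomposition}, so any eigenvector of $T$ at $\sigma>0$ lies in $\mspan\{(u_i,v_i):\sigma_i=\sigma\}$ and its components, after normalization, are admissible left and right singular vectors. You instead work directly from the coupled equations $Av=\sigma u$, $A^*u=\sigma v$, pass to $A^*Av=\sigma^2 v$, and invoke Lemma~\ref{lem:eigendecomposition_svd}, finishing with the identity $\norm{u}_F=\norm{v}_H$ via $\norm{Av}_F^2=\innerprod{A^*Av}{v}_H=\sigma^2\norm{v}_H^2$. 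Both are sound; your version is self-contained at the level of the corollary, never needs the full spectral picture of $T$, and, as you note, absorbs singular values of higher multiplicity without having to match $(u,v)$ against the canonical orthonormal system $\tfrac{1}{\sqrt2}(\pm u_i,v_i)$ (since any normalized eigenfunction of $A^*A$ at $\sigma^2$ can be completed to a system as in Lemma~\ref{lem:eigendecomposition_svd}). The paper's route buys the complete eigendecomposition of $T$, including the negative eigenvalues $-\sigma_i$ and the vanishing of $T$ on the orthogonal complement, which is what the block-operator formulation in Section~\ref{sec:SVD} actually uses; the corollary then drops out as a by-product. One small point to make explicit in your write-up: $v\neq 0$ does not follow merely from $\sigma^2\neq 0$ in the equation $A^*Av=\sigma^2 v$, but from the coupled system — if $v=0$ then $\sigma u=Av=0$ forces $u=0$, contradicting that $(u,v)$ is an eigenvector — which is exactly the observation you make at the end, so it only needs to be moved to where you first assert $v\neq 0$.
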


\section{Decompositions of RKHS operators}
\label{sec:Decompositions of RKHS operators}

We will first introduce reproducing kernel Hilbert spaces, and then consider empirical operators defined on such spaces. The main results of this section are a basis orthonormalization technique via a kernelized QR decomposition in Section~\ref{sec:Kernel QR} and the eigendecomposition and singular value decomposition of empirical RKHS operators in Section~\ref{sec:eigendecomposition} and Section~\ref{sec:SVD} via auxiliary problems, respectively. The notation is adopted from~\cite{MFSS17, KSM19} and summarized in Table~\ref{tab:Notation}.

\subsection{RKHS}

The following definitions are based on \cite{StCh08, Schoe01}. In order to distinguish reproducing kernel Hilbert spaces from standard Hilbert spaces, we will use script style letters for the latter, i.e., $ \mathscr{H} $ and $ \mathscr{F} $.

\begin{table}[htb]
    \centering
    \caption{Overview of notation.}
    \begin{tabular}{l@{\hspace{3em}}c@{\hspace{3em}}c}
        \hline
        random variable & $ X $                     & $ Y $               \\
        domain          & $ \inspace $              & $ \outspace $       \\
        observation     & $ x $                     & $ y $               \\
        kernel function & $ k(x, x^\prime) $        & $ l(y, y^\prime) $  \\
        feature map     & $ \phi(x) $               & $ \psi(y) $         \\
        feature matrix  & $ \Phi = [\phi(x_1), \dots, \phi(x_m)] $ & $ \Psi = [\psi(y_1), \dots, \psi(y_n)] $ \\
        Gram matrix     & $ \gram[\Phi] = \Phi^\top \Phi $         & $ \gram[\Psi] = \Psi^\top \Psi $           \\
        RKHS            & $ \mathscr{H} $            & $ \mathscr{F} $      \\
        \hline
    \end{tabular}
    \label{tab:Notation}
\end{table}

\begin{definition}[Reproducing kernel Hilbert space, \cite{Schoe01}] \label{def:RKHS}
Let $ \inspace $ be a set and $ \mathscr{H} $ a space of functions $ f \colon \inspace \to \R $. Then $ \mathscr{H} $ is called a \emph{reproducing kernel Hilbert space (RKHS)} with corresponding inner product $ \innerprod{\cdot}{\cdot}_\mathscr{H} $ if a function $ k \colon \inspace \times \inspace \to \R $ exists such that
\begin{enumerate}[label=(\roman*), itemsep=0ex, topsep=1ex]
\item $ \innerprod{f}{k(x, \cdot)}_\mathscr{H} = f(x) $ for all $ f \in \mathscr{H} $ and
\item $ \mathscr{H} = \overline{\mspan\{k(x, \cdot) \mid x \in \inspace \}} $.
\end{enumerate}
\end{definition}

The function $ k $ is called \emph{reproducing kernel} and the first property the \emph{reproducing property}. It follows in particular that $ k(x, x^\prime) = \innerprod{k(x, \cdot)}{k(x^\prime,\cdot)}_\mathscr{H} $. The canonical feature map $ \phi \colon \inspace \to \mathscr{H} $ is given by $ \phi(x) := k(x, \cdot) $. Thus, we obtain $ k(x, x^\prime) = \innerprod{\phi(x)}{\phi(x^\prime)}_\mathscr{H} $. It was shown that an RKHS has a unique symmetric and positive definite kernel with the reproducing property and, conversely, that a symmetric positive definite kernel $ k $ induces a unique RKHS with $ k $ as its reproducing kernel~\cite{aronszajn50reproducing}. We will refer to the set $ \inspace $ as the corresponding \emph{observation space}.

\subsection{RKHS operators}

Finite-rank operators can be defined by a finite number of fixed basis elements in the corresponding RKHSs. In practice, finite-rank RKHS operators are usually estimates of infinite-dimensional operators based on a set of empirical observations. We later refer to this special type of finite-rank operator as \emph{empirical RKHS operator} although the concepts in this section are more general and do not need the assumption of the data in the observation space being given by random events.

Let $ \mathscr{H} $ and $ \mathscr{F} $ denote RKHSs based on the observation spaces $ \inspace $ and $ \outspace $, respectively. Given $ x_1, \dots, x_m  \in \inspace $ and $ y_1, \dots, y_n \in \outspace $, we call
\begin{equation*}
    \Phi := [\phi(x_1), \dots, \phi(x_m)] \quad \text{and} \quad \Psi := [\psi(y_1), \dots, \psi(y_n)]
\end{equation*}
their associated \emph{feature matrices}. Note that feature matrices are technically not matrices but row vectors in $ \mathscr{H}^m $ and $ \mathscr{F}^n $, respectively. Since the embedded observations in the form of $ \phi(x_i) \in \mathscr{H} $ and $ \psi(y_j) \in \mathscr{F} $ can themselves be interpreted as (possibly infinite-dimensional) vectors, the term \emph{feature matrix} is used. In what follows, we assume that feature matrices contain linearly independent elements. This is, for example, the case if $ k(\cdot, \cdot) $ is a radial basis kernel and the observations $ x_1, \dots, x_m \in \inspace $ consist of pairwise distinct elements. Given the feature matrices $ \Phi $ and $ \Psi $, we can define the corresponding Gram matrices by $ \gram[\Phi] = \Phi^\top \Phi \in \R^{m \times m} $ and $ \gram[\Psi] = \Psi^\top \Psi \in \R^{n \times n} $. That is, $ [\gram[\Phi]]_{ij} = k(x_i, x_j) $ and $ [\gram[\Psi]]_{ij} = l(y_i, y_j) $. We will now analyze operators $ S \colon \mathscr{H} \to \mathscr{F} $ of the form $ S = \Psi B \Phi^\top $, where $ B \in \R^{n \times m} $. Given $ v \in \mathscr{H} $, we obtain
\begin{equation*}
    S v = \Psi B \Phi^\top v
        = \sum_{i = 1}^n \psi(y_i) \sum_{j=1}^m b_{ij} \innerprod{\phi(x_j)}{v}_\mathscr{H}.
\end{equation*}
We will refer to operators $ S $ of this form as \emph{empirical RKHS operators}. Examples of such operators are described in Section~\ref{sec:Applications}.

\begin{remark} \label{rem:independent_features}
If the rows of $ B $ are linearly independent in $ \R^m $, then the elements of $ B \tsp \Phi^\top $ are linearly independent in $ \mathscr{H} $. The analogue statement holds for linearly independent columns of $ B $ and elements of $ \Psi \tsp B $ in $ \mathscr{F} $.
\end{remark}

\begin{proposition} \label{prop:rkhs_operator_properties}
The operator $ S $ defined above has the following properties:
\begin{enumerate}[label=(\roman*)]
\item $ S $ is a finite-rank operator. In particular, $ \rank(S) = \rank(B) $.
\item $ S^* = \Phi B^\top \Psi^\top $.
\item Let $ B = W \Sigma Z^\top $ be the the singular value decomposition of $ B $, where $ W = [\mathbf{w}_1, \dots, \mathbf{w}_n] $, $ \Sigma = \diag(\sigma_1, \dots, \sigma_r, 0, \dots, 0) $, and $ Z = [\mathbf{z}_1, \dots, \mathbf{z}_m] $, then
\begin{equation*}
    \norm{S} \leq \sum\limits_{i=1}^r \sigma_i \norm{\Psi \mathbf{w}_i}_\mathscr{F} \norm{\Phi \mathbf{z}_i}_\mathscr{H}.
\end{equation*}
\end{enumerate}
\end{proposition}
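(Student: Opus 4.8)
The plan is to establish the three claims in order, using Theorem~\ref{theo:finite_rank_operators} for (i) and (ii), and Lemma~\ref{lem:eigendecomposition_svd} together with the triangle inequality and the remark $\norm{A} = \sigma_1(A)$ for (iii).

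For (i) and (ii), first I would rewrite $S = \Psi B \Phi^\top$ explicitly as a sum of tensor product operators. Writing $B = (b_{ij})$, the action $Sv = \sum_{i=1}^n \psi(y_i) \sum_{j=1}^m b_{ij} \innerprod{\phi(x_j)}{v}_\mathscr{H}$ shows that $S = \sum_{i=1}^n \psi(y_i) \otimes \big(\sum_{j=1}^m b_{ij}\, \phi(x_j)\big)$, i.e. $S = \sum_{i=1}^n (\Psi \mathbf{e}_i) \otimes (\Phi B^\top \mathbf{e}_i)$ if one prefers a coordinate-free phrasing. To pin down the rank, I would instead use the SVD $B = W\Sigma Z^\top$ with $r = \rank(B)$: substituting gives $S = \Psi W \Sigma Z^\top \Phi^\top = \sum_{i=1}^r \sigma_i\, (\Psi \mathbf{w}_i) \otimes (\Phi \mathbf{z}_i)$. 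Since the $\mathbf{w}_i$ are orthonormal in $\R^n$ and the columns of $\Psi$ are linearly independent, the vectors $\Psi \mathbf{w}_i \in \mathscr{F}$ are linearly independent (Remark~\ref{rem:independent_features}); likewise the $\Phi \mathbf{z}_i \in \mathscr{H}$ are linearly independent. Absorbing $\sigma_i$ into either factor, Theorem~\ref{theo:finite_rank_operators} then yields $\rank(S) = r = \rank(B)$ and, reading off the adjoint from the same theorem (or directly from $(\Psi \mathbf{w}_i \otimes \Phi \mathbf{z}_i)^* = \Phi \mathbf{z}_i \otimes \Psi \mathbf{w}_i$), $S^* = \sum_{i=1}^r \sigma_i\, (\Phi \mathbf{z}_i) \otimes (\Psi \mathbf{w}_i) = \Phi B^\top \Psi^\top$.

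For (iii), the idea is to bound the operator norm of the finite sum $S = \sum_{i=1}^r \sigma_i\, (\Psi \mathbf{w}_i) \otimes (\Phi \mathbf{z}_i)$ term by term. By the triangle inequality for the operator norm, $\norm{S} \leq \sum_{i=1}^r \sigma_i \, \norm{(\Psi \mathbf{w}_i) \otimes (\Phi \mathbf{z}_i)}$, so it remains to compute $\norm{y \otimes x} = \norm{y}_\mathscr{F} \norm{x}_\mathscr{H}$ for a rank-one operator. This is immediate: $\norm{(y\otimes x)h}_\mathscr{F} = \abs{\innerprod{x}{h}_\mathscr{H}}\,\norm{y}_\mathscr{F} \leq \norm{x}_\mathscr{H}\norm{h}_\mathscr{H}\norm{y}_\mathscr{F}$ with equality attained at $h = x/\norm{x}_\mathscr{H}$. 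Plugging in $y = \Psi \mathbf{w}_i$ and $x = \Phi \mathbf{z}_i$ gives exactly $\norm{S} \leq \sum_{i=1}^r \sigma_i \norm{\Psi \mathbf{w}_i}_\mathscr{F} \norm{\Phi \mathbf{z}_i}_\mathscr{H}$.

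None of the steps presents a serious obstacle; this is essentially bookkeeping with tensor-product operators. The one point requiring a little care is the linear-independence claim underlying the exact rank identity in (i): one must invoke that the feature matrices $\Phi, \Psi$ have linearly independent columns (standing assumption) and that multiplying by a matrix of full row rank — here $W^\top$ restricted to its first $r$ rows, and $Z^\top$ likewise — preserves linear independence, which is the content of Remark~\ref{rem:independent_features}. Everything else follows directly from the definitions and Theorem~\ref{theo:finite_rank_operators}.
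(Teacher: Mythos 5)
Your proposal is correct and follows essentially the same route as the paper: substitute the matrix SVD $B = W\Sigma Z^\top$ to write $S = \sum_{i=1}^r \sigma_i\,(\Psi\mathbf{w}_i)\otimes(\Phi\mathbf{z}_i)$, use Remark~\ref{rem:independent_features} for linear independence, and invoke Theorem~\ref{theo:finite_rank_operators} for the rank and the adjoint. Your explicit triangle-inequality argument with $\norm{y\otimes x} = \norm{y}_\mathscr{F}\norm{x}_\mathscr{H}$ for part (iii) merely spells out what the paper leaves implicit when it says the theorem ``directly yields all the desired statements.''
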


\begin{proof}
The linearity of $ S $ follows directly from the linearity of the inner product in $ \mathscr{H} $. We now show that properties (i)--(iii) can directly be obtained from Theorem~\ref{theo:finite_rank_operators}. Using $ B = W \Sigma Z^\top $, we can write $ S = (\Psi W) \Sigma (Z^\top \Phi^\top)$ and obtain
\begin{equation} \label{eq:finite_rank_empirical_form}
    S v = \sum\limits_{i=1}^r \sigma_i \Psi \mathbf{w}_i \innerprod{\Phi \mathbf{z}_i}{v}_\mathscr{H} \text{ for all } v \in \mathscr{H}.
\end{equation}
Since the elements in $ \Phi $ and $ \Psi $ are linearly independent, we see that $ \Phi Z $ and $ \Psi W $ are also feature matrices containing the linearly independent elements $ \Phi \tsp \mathbf{z}_i \in \mathscr{H} $ and $ \Psi \tsp \mathbf{w}_i \in \mathscr{F} $ as stated in Remark~\ref{rem:independent_features}. Therefore, \eqref{eq:finite_rank_empirical_form} satisfies the assumptions in Theorem \ref{theo:finite_rank_operators} if we choose $\{ \Phi \tsp \mathbf{z}_i \}_{1 \leq i \leq r} \subseteq \mathscr{H} $ and $\{ \sigma_i \Psi \tsp \mathbf{w}_i \}_{1 \leq i \leq r} \subseteq \mathscr{F} $ to be the required linearly independent sets.
Theorem \ref{theo:finite_rank_operators} directly yields all the desired statements.
\end{proof}

Note that the characterization \eqref{eq:finite_rank_empirical_form} is in general not a singular value decomposition of $ S $ since the given basis elements in $ \Phi Z $ and $ \Psi W $ are not necessarily orthonormal systems in $ \mathscr{H} $ and $ \mathscr{F} $, respectively.

\subsection{Basis orthonormalization and kernel QR decomposition}
\label{sec:Kernel QR}
When we try to perform any type of decomposition of the operator $S = \Psi B \Phi^\top$, we face the problem that the representation matrix $B$ is defined to work on feature matrix entries of $ \Psi$ and $\Phi$, which are not necessarily orthonormal systems
in the corresponding RKHSs. This leads to the fact that we can not simply 
decompose $ B $ with standard numerical routines based on the Euclidean inner products and expect a meaningful equivalent decomposition of $S$ in terms of RKHS inner products.
We therefore orthonormalize the feature matrices with respect to to the RKHS inner products and capture these transformations in a new representation matrix $\tilde{B}$ which allows using
matrix decompositions to obtain operator decompositions of~$S$.
We now generalize the matrix QR decomposition to feature matrices,
which is essentially equivalent to a kernelized Gram--Schmidt procedure~\cite{STJ04}. By expressing empirical RKHS operators with respect
to orthonormal feature matrices, we can perform operator decompositions in terms of a simple matrix decomposition.

\begin{proposition}[Kernel QR decomposition] \label{prop:kernelQR}
    Let $\Phi \in \mathscr{H}^m$ be a feature matrix. Then there exists a unique upper triangular matrix $ R \in \R^{m \times m} $ with strictly positive diagonal elements
    and a feature matrix $ \tilde{\Phi} \in \mathscr{H}^m$, such that
    \begin{equation*}
    \Phi = \tilde{\Phi} R
    \end{equation*} and $\tilde{\Phi}^\top  \tilde{\Phi}= I_m$.
\end{proposition}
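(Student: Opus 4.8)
The plan is to mimic the classical Gram--Schmidt argument, but carried out entirely in terms of the RKHS inner product $\innerprod{\cdot}{\cdot}_\mathscr{H}$ applied to the entries $\phi(x_1),\dots,\phi(x_m)$ of $\Phi$, and then to record the resulting change of basis as the matrix $R$. Concretely, I would construct an orthonormal feature matrix $\tilde\Phi=[\tilde\phi_1,\dots,\tilde\phi_m]$ by setting $\tilde\phi_1:=\norm{\phi(x_1)}_\mathscr{H}^{-1}\phi(x_1)$ and, inductively, $\tilde\phi_j := c_j^{-1}\big(\phi(x_j)-\sum_{i<j}\innerprod{\tilde\phi_i}{\phi(x_j)}_\mathscr{H}\,\tilde\phi_i\big)$ where $c_j>0$ is the norm of the bracketed vector. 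The linear independence of the entries of $\Phi$ (assumed throughout for feature matrices) guarantees that the bracketed vector is nonzero at each step, so $c_j>0$ is well defined and $\tilde\phi_j\in\mathscr{H}$; by construction the $\tilde\phi_j$ form an orthonormal system, which is exactly the statement $\tilde\Phi^\top\tilde\Phi=I_m$ once one unwinds the convention that $\tilde\Phi^\top\tilde\Phi$ is the Gram matrix with entries $\innerprod{\tilde\phi_i}{\tilde\phi_j}_\mathscr{H}$.

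Next I would read off the matrix $R$. Rearranging the inductive definition gives $\phi(x_j)=c_j\tilde\phi_j+\sum_{i<j}\innerprod{\tilde\phi_i}{\phi(x_j)}_\mathscr{H}\tilde\phi_i$, so if we define $R\in\R^{m\times m}$ by $R_{jj}:=c_j>0$, $R_{ij}:=\innerprod{\tilde\phi_i}{\phi(x_j)}_\mathscr{H}$ for $i<j$, and $R_{ij}:=0$ for $i>j$, then $R$ is upper triangular with strictly positive diagonal and $\Phi=\tilde\Phi R$ holds columnwise, hence as an identity in $\mathscr{H}^m$. This establishes existence. (One may note in passing that $R^\top R=\Phi^\top\Phi=\gram[\Phi]$, so $R$ is precisely the Cholesky factor of the Gram matrix, which gives an alternative route to existence; I would mention this since it also makes the numerical content explicit.)

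For uniqueness, I would argue as in the matrix case. Suppose $\Phi=\tilde\Phi R=\tilde\Phi' R'$ with both $R,R'$ upper triangular with positive diagonal and both $\tilde\Phi,\tilde\Phi'$ orthonormal. Taking Gram matrices yields $R^\top R=R'^\top R'=\gram[\Phi]$, and since $\gram[\Phi]$ is symmetric positive definite (the entries of $\Phi$ being linearly independent) its Cholesky factorization with positive diagonal is unique, so $R=R'$; then $R$ is invertible and $\tilde\Phi=\tilde\Phi'$ follows by multiplying on the right by $R^{-1}$.

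The only genuine subtlety — the "main obstacle" — is bookkeeping rather than mathematics: one must be careful that $\tilde\Phi$ is a \emph{bona fide} feature matrix, i.e. a row vector in $\mathscr{H}^m$ with linearly independent entries, and that the symbols $\tilde\Phi^\top\tilde\Phi$ and $\Phi=\tilde\Phi R$ are interpreted consistently with the paper's convention that these "matrix" operations on feature matrices are shorthand for the corresponding RKHS computations (as in the definition of $\gram[\Phi]=\Phi^\top\Phi$). Once those conventions are pinned down, linear independence of $\tilde\Phi$ is immediate from $R$ being invertible, and the whole argument is the standard Gram--Schmidt / Cholesky correspondence transplanted to $\mathscr{H}$.
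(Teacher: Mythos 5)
Your proof is correct, and it reaches the result by a slightly different route than the paper. The paper's proof is purely matrix-algebraic: since the entries of $\Phi$ are linearly independent, $\gram[\Phi]=\Phi^\top\Phi$ is symmetric positive definite, so one takes its (unique) Cholesky factorization $\gram[\Phi]=R^\top R$ and \emph{defines} $\tilde\Phi:=\Phi R^{-1}$, checking $\tilde\Phi^\top\tilde\Phi=I_m$ directly. You instead run the kernelized Gram--Schmidt procedure in $\mathscr{H}$, constructing $\tilde\Phi$ first and reading off $R$ from the recursion, and you only invoke Cholesky as a side remark and for uniqueness. Both arguments are sound and closely related (the paper itself notes that its construction is ``essentially equivalent to a kernelized Gram--Schmidt procedure''); the paper's version is shorter and makes the numerical content immediate (the factor $R$ is computed as a Cholesky factor of the Gram matrix, which is what one implements), while your version is more constructive in the function space and, notably, spells out the uniqueness step more completely: you show explicitly that \emph{any} admissible pair $(\tilde\Phi,R)$ forces $R^\top R=\gram[\Phi]$, so that uniqueness of $R$ follows from uniqueness of the Cholesky factor and $\tilde\Phi=\Phi R^{-1}$ is then determined as well --- a step the paper leaves implicit. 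Your bookkeeping remark about interpreting $\tilde\Phi^\top\tilde\Phi$ and $\Phi=\tilde\Phi R$ via the RKHS inner product, and about $\tilde\Phi$ having linearly independent entries (immediate from invertibility of $R$), is exactly the right caveat and consistent with the paper's conventions.
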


\begin{proof}
    We have assumed that elements in feature matrices are linearly independent. Therefore $\Phi^\top \Phi$ is strictly positive definite. We have 
    a Cholesky decomposition  $\Phi^\top \Phi = R^\top R$ for a unique upper triangular matrix $ \R^{m \times m} $ with positive diagonal entries.
    By setting $ \tilde{\Phi} := \Phi R^{-1}$ and observing that $ \tilde{\Phi}^\top  \tilde{\Phi} = (\Phi R^{-1})^\top \Phi R^{-1} = I_m$,
    the claim follows. 
\end{proof}

By using Proposition~\ref{prop:kernelQR}, we can express empirical operators in orthonormalized basis elements. Given an empirical RKHS operator
$S = \Psi B \Phi^\top$ and the two corresponding kernel QR decompositions $\Phi = \tilde{\Phi}R_\Phi$ and  $\Psi = \tilde{\Psi}R_\Psi$,
we can rewrite
\begin{equation} \label{eq:QR_operator}
S = (\tilde{\Psi} R_\Psi^{-1}) B (\tilde{\Phi} R_\Phi^{-1})^\top = \tilde{\Psi} (R_\Psi^{-1} B (R_\Phi^{-1})^\top) \tilde{\Phi}^\top =  
\tilde{\Psi} \tilde{B} \tilde{\Phi}^\top.
\end{equation}

We can now simply perform any type of matrix decomposition on the new representation matrix $ \tilde{B}:= R_\Psi^{-1} B (R_\Phi^{-1})^\top$ to obtain an equivalent decomposition of the operator $S$. As examples, we give the SVD and the eigendecomposition
of $S$.

\begin{corollary}[Singular value decomposition]
    Let $ S = \tilde{\Psi} \tilde{B} \tilde{\Phi}^\top \colon \mathscr{H} \rightarrow \mathscr{F}$ be given by orthonormalized basis elements as above. 
    If $ \tilde{B} = \sum_{i=1}^r \sigma_i u_iv_i^\top $ is the singular value decomposition of $ \tilde{B} $,
    then 
    \begin{equation*}
    S = \sum\limits_{i=1}^r \sigma_i (\tilde{\Psi}u_i \otimes \tilde{\Phi}v_i)
    \end{equation*}
    is the singular value decomposition of $S$.
\end{corollary}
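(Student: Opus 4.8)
The plan is to substitute the matrix singular value decomposition of $\tilde{B}$ into the representation $S = \tilde{\Psi}\tilde{B}\tilde{\Phi}^\top$ from \eqref{eq:QR_operator}, expand the resulting finite sum, reinterpret each rank-one term as a tensor product operator in the sense of Definition~\ref{defi:tensor_product}, and finally check that the vectors appearing form genuine orthonormal systems so that the expression matches the canonical SVD form of a finite-rank operator recalled in Section~\ref{sec:Preliminaries}. The only non-bookkeeping step is the orthonormality verification, and it reduces to a one-line Gram-matrix computation.

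First I would unwind the feature-matrix notation. Write $\tilde{B} = \sum_{i=1}^r \sigma_i u_i v_i^\top$ with $u_i \in \R^n$, $v_i \in \R^m$; by linearity of the matrix-style product, $S = \tilde{\Psi}\big(\sum_{i=1}^r \sigma_i u_i v_i^\top\big)\tilde{\Phi}^\top = \sum_{i=1}^r \sigma_i (\tilde{\Psi}u_i)(v_i^\top \tilde{\Phi}^\top)$, where $\tilde{\Psi}u_i \in \mathscr{F}$ and $\tilde{\Phi}v_i \in \mathscr{H}$. Applying the $i$th term to an arbitrary $h \in \mathscr{H}$, the entries of the row vector $\tilde{\Phi}^\top h$ are the inner products $\innerprod{\tilde{\phi}_j}{h}_\mathscr{H}$, so $(v_i^\top \tilde{\Phi}^\top)h = \sum_j (v_i)_j \innerprod{\tilde{\phi}_j}{h}_\mathscr{H} = \innerprod{\tilde{\Phi}v_i}{h}_\mathscr{H}$, and hence the $i$th term sends $h$ to $\innerprod{\tilde{\Phi}v_i}{h}_\mathscr{H}\,\tilde{\Psi}u_i$. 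By Definition~\ref{defi:tensor_product} this is precisely $\big[(\tilde{\Psi}u_i)\otimes(\tilde{\Phi}v_i)\big]h$, so $S = \sum_{i=1}^r \sigma_i\,(\tilde{\Psi}u_i \otimes \tilde{\Phi}v_i)$ as an operator identity.

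Next I would verify orthonormality. By the kernel QR decomposition (Proposition~\ref{prop:kernelQR}) we have $\tilde{\Phi}^\top\tilde{\Phi} = I_m$ and $\tilde{\Psi}^\top\tilde{\Psi} = I_n$, so, using that $\{v_i\}_{i=1}^r$ are orthonormal right singular vectors of $\tilde{B}$ in $\R^m$, $\innerprod{\tilde{\Phi}v_i}{\tilde{\Phi}v_j}_\mathscr{H} = v_i^\top \tilde{\Phi}^\top\tilde{\Phi}\, v_j = v_i^\top v_j = \delta_{ij}$, and the identical computation with $\tilde{\Psi}$ and the orthonormal left singular vectors $\{u_i\}_{i=1}^r$ shows $\{\tilde{\Psi}u_i\}_{i=1}^r$ is orthonormal in $\mathscr{F}$. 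Since the $\sigma_i$ are the nonzero singular values of $\tilde{B}$ they are strictly positive, and ordering them non-increasingly (as the matrix SVD delivers), the expansion $\sum_{i=1}^r \sigma_i(\tilde{\Psi}u_i \otimes \tilde{\Phi}v_i)$ has exactly the form of the SVD of a finite-rank operator as described in Section~\ref{sec:Preliminaries}; by uniqueness of the SVD it therefore is the SVD of $S$. As a consistency check, $r = \rank(\tilde{B}) = \rank(B) = \rank(S)$ by Proposition~\ref{prop:rkhs_operator_properties}(i), since $R_\Psi^{-1}$ and $R_\Phi^{-1}$ are invertible.

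I expect the main (mild) obstacle to be purely notational: making the identification of the rank-one term $\tilde{\Psi}u_i v_i^\top\tilde{\Phi}^\top$ with the tensor product operator $(\tilde{\Psi}u_i)\otimes(\tilde{\Phi}v_i)$ fully rigorous within the feature-matrix conventions, rather than anything analytically deep. If one prefers not to invoke uniqueness of the SVD, an alternative closing argument is to apply Lemma~\ref{lem:eigendecomposition_svd}: using $S^* = \tilde{\Phi}\tilde{B}^\top\tilde{\Psi}^\top$ from Proposition~\ref{prop:rkhs_operator_properties}(ii) one computes $S^*S = \tilde{\Phi}\,(\tilde{B}^\top\tilde{B})\,\tilde{\Phi}^\top$, checks that $\tilde{\Phi}v_i$ is a normalized eigenfunction of $S^*S$ with eigenvalue $\sigma_i^2$ and that $\sigma_i^{-1}S(\tilde{\Phi}v_i) = \tilde{\Psi}u_i$, and reads off the claimed decomposition directly.
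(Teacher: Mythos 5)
Your proof is correct and follows essentially the same route the paper intends: the corollary is presented as an immediate consequence of the orthonormalized representation $S = \tilde{\Psi}\tilde{B}\tilde{\Phi}^\top$ from the kernel QR decomposition, and you simply make explicit the routine steps (substituting the matrix SVD, identifying rank-one terms with tensor product operators, and checking orthonormality of $\{\tilde{\Phi}v_i\}$ and $\{\tilde{\Psi}u_i\}$ via $\tilde{\Phi}^\top\tilde{\Phi} = I_m$ and $\tilde{\Psi}^\top\tilde{\Psi} = I_n$). The appeal to uniqueness of the SVD is unnecessary --- once the expression has the canonical form with orthonormal systems and positive ordered $\sigma_i$, it is by definition an SVD of $S$ --- and your alternative closing via Lemma~\ref{lem:eigendecomposition_svd} already covers that point.
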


For the eigendecomposition, we require the operator to be a mapping from $ \mathscr{H} $ to itself. 
We will assume that both the domain and the range of $ S $ are defined via the same feature matrix $ \Phi $.
We consider the self-adjoint case, that is $ B $ (or equivalently~$\tilde{B}$) is symmetric.

\begin{corollary}[Eigendecomposition] \label{cor:eigendecomposition}
    Let $ S = \tilde{\Phi} \tilde{B} \tilde{\Phi}^\top \colon \mathscr{H} \rightarrow \mathscr{H}$ be given by orthonormalized basis elements as above. 
    Let $\tilde{B}$ be symmetric.
    If $ \tilde{B} = \sum_{i=1}^r \lambda_i v_i v_i^\top $ is the eigendecomposition of $ \tilde{B} $,
    then 
    \begin{equation}
    S = \sum\limits_{i=1}^r \lambda_i (\tilde{\Phi} v_i \otimes \tilde{\Phi} v_i)
    \end{equation} is the eigendecomposition of $S$.
\end{corollary}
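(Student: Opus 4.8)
The plan is to substitute the matrix eigendecomposition of $\tilde{B}$ into the operator factorization $S = \tilde{\Phi}\tilde{B}\tilde{\Phi}^\top$ and then check that the resulting expansion satisfies the three defining features of an operator eigendecomposition as recalled in Section~\ref{sec:Preliminaries}: self-adjointness of $S$, orthonormality of the system $\{\tilde{\Phi} v_i\}_{1 \le i \le r}$ in $\mathscr{H}$, and the fact that the $\lambda_i$ are exactly the nonzero eigenvalues of $S$ with the $\tilde{\Phi} v_i$ as associated eigenfunctions.

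First I would record two ingredients. By Proposition~\ref{prop:kernelQR} the feature matrix $\tilde{\Phi}$ satisfies $\tilde{\Phi}^\top \tilde{\Phi} = I_m$, and the spectral theorem for the symmetric matrix $\tilde{B}$ supplies an orthonormal family $\{v_i\}_{1 \le i \le r} \subseteq \R^m$ (with $r = \rank(\tilde{B})$) and nonzero reals $\lambda_i$ with $\tilde{B} = \sum_{i=1}^r \lambda_i v_i v_i^\top$. Combining these, for all $i,j$ one gets $\innerprod{\tilde{\Phi} v_i}{\tilde{\Phi} v_j}_\mathscr{H} = v_i^\top \tilde{\Phi}^\top \tilde{\Phi} v_j = v_i^\top v_j = \delta_{ij}$, so $\{\tilde{\Phi} v_i\}_{1 \le i \le r}$ is an orthonormal system in $\mathscr{H}$.

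Next I would unwind the feature-matrix products. For $h \in \mathscr{H}$, the vector $\tilde{\Phi}^\top h \in \R^m$ has entries $\innerprod{\tilde{\phi}_k}{h}_\mathscr{H}$, hence $v_i^\top \tilde{\Phi}^\top h = \innerprod{\tilde{\Phi} v_i}{h}_\mathscr{H}$ and therefore $\tilde{\Phi}\,(v_i v_i^\top)\,\tilde{\Phi}^\top h = \innerprod{\tilde{\Phi} v_i}{h}_\mathscr{H}\,\tilde{\Phi} v_i = (\tilde{\Phi} v_i \otimes \tilde{\Phi} v_i)\,h$ by Definition~\ref{defi:tensor_product}. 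Summing over $i$ with coefficients $\lambda_i$ yields $S = \sum_{i=1}^r \lambda_i(\tilde{\Phi} v_i \otimes \tilde{\Phi} v_i)$. Self-adjointness of $S$ is immediate from Proposition~\ref{prop:rkhs_operator_properties}(ii) together with the symmetry of $\tilde{B}$, and $\rank(S) = r$ follows since the $\tilde{\Phi} v_i$ are linearly independent, via Theorem~\ref{theo:finite_rank_operators}. Finally, evaluating on a basis vector, $S\,\tilde{\Phi} v_j = \sum_i \lambda_i \innerprod{\tilde{\Phi} v_i}{\tilde{\Phi} v_j}_\mathscr{H}\,\tilde{\Phi} v_i = \lambda_j\,\tilde{\Phi} v_j$, so the $\lambda_i$ are the nonzero eigenvalues of $S$ and the $\tilde{\Phi} v_i$ the corresponding eigenfunctions; hence the displayed sum is the eigendecomposition of $S$ in the sense of Section~\ref{sec:Preliminaries}.

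I do not expect a real obstacle: the statement amounts to the observation that the orthogonal change of basis on $\R^m$ implemented by the isometric columns of $\tilde{\Phi}$ transports the matrix spectral theorem to the operator level. The only point requiring care — the closest thing to a subtlety — is verifying that $\tilde{\Phi}\,(v_i v_i^\top)\,\tilde{\Phi}^\top$ is literally the rank-one operator $\tilde{\Phi} v_i \otimes \tilde{\Phi} v_i$ of Definition~\ref{defi:tensor_product}, i.e. keeping straight which products are Euclidean inner products in $\R^m$ and which are RKHS inner products; once the transpose bookkeeping is done this is immediate. (Alternatively one could deduce the result from the SVD corollary together with the fact that the SVD and the eigendecomposition of a self-adjoint operator coincide up to signs, but the direct verification above is shorter.)
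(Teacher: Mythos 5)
Your proof is correct and follows exactly the route the paper intends: the paper states this corollary without a separate proof, treating it as immediate from the kernel QR orthonormalization $\tilde{\Phi}^\top\tilde{\Phi} = I_m$, which is precisely the substitution-and-orthonormality argument you spell out (including the identification $\tilde{\Phi}\,(v_i v_i^\top)\,\tilde{\Phi}^\top = \tilde{\Phi} v_i \otimes \tilde{\Phi} v_i$ and the verification $S\,\tilde{\Phi}v_j = \lambda_j\,\tilde{\Phi}v_j$). Your write-up is simply a more explicit version of the same argument, so there is nothing to correct.
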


In particular, the matrix $ \tilde{B} $ and the operator $ S $ share the same singular values (or eigenvalues, respectively) potentially up to zero.
In practice, computing the singular value decomposition of $ S $ by this approach needs two kernel QR decompositions (which numerically results in Cholesky decompositions of the Gram matrices), inversions of the triangular matrices $R_\Psi$ and $R_\Phi$ and the final decomposition of $\tilde{B}$.
For the eigendecomposition we need a single kernel QR decomposition and inversion before performing the eigendecomposition of $ \tilde{B}$.
Since this may numerically be costly, we give an overview of how eigendecompositions and singular value decompositions of empirical RKHS
operators can be performed by solving a single related auxiliary problem.

\begin{remark}
    Representation~\eqref{eq:QR_operator} makes it possible to compute
    classical matrix decompositions such as Schur decompositions, LU-type decompositions, or polar decompositions on $\tilde{B}$ and obtain a corresponding decomposition of the operator $S$.
    Note however that when $S$ approximates an operator for
    $n,m \to \infty$, it is not necessarily given that these empirical decompositions of $ S $ converge to a meaningful infinite-rank concept that is equivalent. For the eigendecomposition and the
    singular value decomposition, this reduces
    to classical operator perturbation theory~\cite{Kato80}.
\end{remark}

\subsection{Eigendecomposition via auxiliary problem}
\label{sec:eigendecomposition}

The eigendecomposition of RKHS operators via an auxiliary problem was first considered in \cite{KSM19}. For the sake of completeness, we will briefly recapitulate the main result and derive additional properties. For the eigendecomposition, we again require the operator to be a mapping from $ \mathscr{H} $ to itself. For this section, we  define a new feature matrix by $ \Upsilon = [\phi(x'_1), \dots, \phi(x'_m)] $. Note that the sizes of $ \Phi $ and $ \Upsilon $ have to be identical.

\begin{proposition}[cf.~\cite{KSM19}] \label{prop:eigendecomposition}
    Let $ S \colon \mathscr{H} \to \mathscr{H} $ with $ S = \Upsilon B \Phi^\top $ and $ B \in \R^{m \times m} $ be an empirical RKHS operator. Then the following
    statements hold:
    \begin{enumerate}[label=(\roman*)]
        \item If $ \lambda $ is an eigenvalue of $ B \Phi^\top \Upsilon \in \R^{m \times m} $ with corresponding eigenvector $ \mathbf{w} \in \R^m $, then $ \Upsilon \mathbf{w} \in \mathscr{H} $ is an eigenfunction of $ S $ corresponding to $ \lambda $.
        \item Conversely, if $ \lambda \neq 0 $ is an eigenvalue of $ S $ corresponding to the eigenfunction $ v \in \mathscr{H} $, then $ B \Phi^\top v \in \R^m $ is an eigenvector of $ B \Phi^\top \Upsilon \in \R^{m \times m} $ corresponding to the eigenvalue~$ \lambda $.
    \end{enumerate}
    In particular, the operator $ S $ and the matrix $ B \Phi^\top \Upsilon $ share the same nonzero eigenvalues.
\end{proposition}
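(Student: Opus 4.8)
The plan is to regard $\Phi^\top$ and $\Upsilon$ as genuine linear maps between $\mathscr{H}$ and $\R^m$: write $\Upsilon \colon \R^m \to \mathscr{H}$, $\mathbf{w} \mapsto \sum_{i=1}^m w_i \tsp \phi(x'_i)$, and $\Phi^\top \colon \mathscr{H} \to \R^m$, $v \mapsto (\innerprod{\phi(x_i)}{v}_\mathscr{H})_{i=1}^m$, so that $S = \Upsilon B \Phi^\top$ and the auxiliary matrix $B\Phi^\top\Upsilon$ are compositions of these three maps, reassociated in the two possible ways. In this language the proposition is essentially the standard fact that $PQ$ and $QP$ have the same nonzero eigenvalues, applied with $P = \Upsilon$ and $Q = B\Phi^\top$; items (i) and (ii) simply record the explicit eigenvector correspondence and keep track of the $\lambda = 0$ case. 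The one structural input I will use repeatedly is that feature matrices have linearly independent columns, so $\Upsilon$ is injective.

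For item (i), assume $B\Phi^\top\Upsilon\mathbf{w} = \lambda\mathbf{w}$ with $\mathbf{w} \neq 0$ and put $v := \Upsilon\mathbf{w}$. Reassociating the composition gives
\[
    S v = \Upsilon B \Phi^\top \Upsilon \mathbf{w} = \Upsilon\bigl(B\Phi^\top\Upsilon\mathbf{w}\bigr) = \Upsilon(\lambda \mathbf{w}) = \lambda\tsp v ,
\]
and $v \neq 0$ because $\Upsilon$ is injective; hence $v$ is an eigenfunction of $S$ for $\lambda$. Note that this step does not use $\lambda \neq 0$: when $\lambda = 0$ it exhibits a nontrivial element of $\mker S$.

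For item (ii), assume $S v = \lambda v$ with $v \neq 0$ and $\lambda \neq 0$, and put $\mathbf{z} := B\Phi^\top v \in \R^m$. From $\lambda v = S v = \Upsilon B\Phi^\top v = \Upsilon\mathbf{z}$ and $\lambda v \neq 0$ we get $\mathbf{z} \neq 0$. Reassociating once more,
\[
    \bigl(B\Phi^\top\Upsilon\bigr)\mathbf{z} = B\Phi^\top\bigl(\Upsilon B\Phi^\top v\bigr) = B\Phi^\top(S v) = B\Phi^\top(\lambda v) = \lambda\tsp\mathbf{z},
\]
so $\mathbf{z}$ is an eigenvector of $B\Phi^\top\Upsilon$ for $\lambda$. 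Finally, the ``in particular'' statement follows by combining the two: a nonzero eigenvalue of $B\Phi^\top\Upsilon$ is an eigenvalue of $S$ by (i), and a nonzero eigenvalue of $S$ is an eigenvalue of $B\Phi^\top\Upsilon$ by (ii).

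I expect the only delicate point to be the non-vanishing bookkeeping — that $\Upsilon\mathbf{w}\neq 0$ in (i) and that $\mathbf{z} = B\Phi^\top v \neq 0$ in (ii) — since otherwise the purported eigenvector could be trivial; both are disposed of by the linear-independence assumption on feature matrices, in the second case together with $\lambda \neq 0$. It is also worth stating explicitly why the statement is asymmetric: the implication in (i) holds for every eigenvalue, whereas the converse genuinely requires $\lambda \neq 0$ (one cannot recover $v$ from $\mathbf{z}$ when $\lambda = 0$), which is precisely why only the nonzero eigenvalues are claimed to match.
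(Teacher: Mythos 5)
Your proof is correct and follows essentially the same route as the paper's: reassociate $\Upsilon B\Phi^\top$ versus $B\Phi^\top\Upsilon$, use linear independence of the feature matrix elements (injectivity of $\Upsilon$) to get $\Upsilon\mathbf{w}\neq 0$ in (i), and use $\lambda\neq 0$ together with $\Upsilon(B\Phi^\top v)=Sv=\lambda v$ to get $B\Phi^\top v\neq 0$ in (ii). Your framing via the general ``$PQ$ and $QP$ share nonzero eigenvalues'' fact is a nice conceptual gloss, but the underlying argument is the same.
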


\begin{proof}
    For the sake of completeness, we briefly reproduce the gist of the proof.
    \begin{enumerate}[label=(\roman*), wide, labelwidth=!, labelindent=0pt]
        \item Let $ \mathbf{w} \in \R^m $ be an eigenvector of the matrix $ B \Phi^\top \Upsilon $ corresponding to the eigenvalue $ \lambda $. Using the associativity of feature matrix multiplication and kernel evaluation, we have
        \begin{equation*}
        S (\Upsilon \mathbf{w}) = \Upsilon (B \Phi^\top \Upsilon \mathbf{w}) = \lambda \Upsilon \mathbf{w}.
        \end{equation*}
        Furthermore, since $ \mathbf{w} \neq 0 \in \R^m $ and the elements in $ \Upsilon $ are linearly independent, we have $ \Upsilon \mathbf{w} \neq 0 \in \mathscr{H} $. Therefore, $ \Upsilon \mathbf{w} $ is an eigenfunction of $ S $ corresponding to $ \lambda $.
        \item Let $ v $ be an eigenfunction of $ S $ associated with the eigenvalue $ \lambda \neq 0 $. By assumption, we then have
        \begin{equation*}
        \Upsilon B \Phi^\top v = \lambda v.
        \end{equation*}
        By ``multiplying'' both sides from the left with $ B \Phi^\top $ and using the associativity of the feature matrix notation, we obtain
        \begin{equation*}
        (B \Phi^\top \Upsilon) B \Phi^\top v = \lambda B \Phi^\top v.
        \end{equation*}
        Furthermore, $ B \Phi^\top v $ cannot be the zero vector in $ \R^m $ as we would have $ \Upsilon (B \Phi^\top v) = S v = 0 \neq \lambda v $ otherwise since $ \lambda $ was assumed to be a nonzero eigenvalue. Therefore, $ B \Phi^\top v $ is an eigenvector of the matrix $ B \Phi^\top \Upsilon $.
    \end{enumerate}
\end{proof}

\begin{remark} \label{rem:eigendecomposition_reformulation}
    Eigenfunctions of empirical RKHS operators may be expressed as a linear combination of elements contained in the feature matrices. However, there exist other formulations of this result \cite{KSM19}. We can, for instance, define the alternative auxiliary problem
    \begin{equation*}
    \Phi^\top \Upsilon B \mathbf{w} = \lambda \mathbf{w}.
    \end{equation*}
    For eigenvalues $ \lambda $ and eigenvectors $ \mathbf{w} \in \R^m $ satisfying this equation, we see that $ \Upsilon B v \in \mathscr{H} $ is an eigenfunction of $ S $. Conversely, for eigenvalues $ \lambda \neq 0 $ and eigenfunctions $ v \in \mathscr{H} $ of $ S $, the auxiliary matrix has the eigenvector $ \Phi^\top v \in \R^m $.
\end{remark}

\begin{example} \label{ex:C_xx}
The eigendecomposition of RKHS operators can be used to obtain an approximation of the Mercer feature space representation of a given kernel. Let us consider the domain $ \inspace = [-2, 2] \times [-2, 2] $ equipped with the Lebesgue measure and the kernel $ k(x, x^\prime) = \left(1 + x^\top x^\prime\right)^2 $. The eigenvalues and eigenfunctions of the integral operator $ \mathcal{E}_k $ defined by
\begin{equation*}
\ebd[k] f(x) = \int k(x, x^\prime) \tsp f(x^\prime) \tsp \dd \mu(x^\prime)
\end{equation*}
are given by
\begin{alignat*}{4}
\lambda_1 &= \tfrac{269 + \sqrt{60841}}{90} \approx 5.72, & \quad e_1(x) &= c_1 \left(\tfrac{-179 + \sqrt{60841}}{120} + x_1^2 + x_2^2 \right), \\
\lambda_2 &= \tfrac{32}{9} \approx 3.56,                  & \quad e_2(x) &= c_2 \tsp x_1 \tsp x_2, \\[1ex]
\lambda_3 &= \tfrac{8}{3} \approx 2.67,                   & \quad e_3(x) &= c_3 \tsp x_1, \\[1ex]
\lambda_4 &= \tfrac{8}{3} \approx 2.67,                   & \quad e_4(x) &= c_4 \tsp x_2, \\[1ex]
\lambda_5 &= \tfrac{64}{45} \approx 1.42,                 & \quad e_5(x) &= c_5 \left(x_1^2 - x_2^2\right), \\[1ex]
\lambda_6 &= \tfrac{269 - \sqrt{60841}}{90} \approx 0.24, & \quad e_6(x) &= c_6 \left(\tfrac{-179 - \sqrt{60841}}{120} + x_1^2 + x_2^2 \right),
\end{alignat*}
where $ c_1, \dots, c_6 $ are normalization constants so that $ \norm{e_i}_\mu = 1 $. Defining $ \phi = [\phi_1, \dots, \phi_6]^\top $, with $ \phi_i = \sqrt{\lambda_i} \tsp e_i $, we thus obtain the Mercer feature space representation of the kernel, i.e., $ k(x, x^\prime) = \innerprod{\phi(x)}{\phi(x^\prime)} $. Here, $ \innerprod{\cdot}{\cdot} $ denotes the standard inner product in $ \R^6 $. For $ f \in \mathscr{H} $, it holds that $ \mathcal{E}_k f = \cov[XX] f $, where $ \cov[XX] $ is the covariance operator.\footnote{For a detailed introduction of covariance and cross-covariance operators, see Section~\ref{sec:Applications}.} We now compute eigenfunctions of its empirical estimate $ \ecov[XX] $ with the aid of the methods described above. That is, $ B = \frac{1}{m} I_m $. Drawing $ m = 5000 $ test points from the uniform distribution on $ \inspace $, we obtain the eigenvalues and (properly normalized) eigenfunctions shown in Figure~\ref{fig:C_xx}. The eigenfunctions are virtually indistinguishable from the analytically computed ones. Note that the eigenspace corresponding to the eigenvalues $ \lambda_{3} $ and $ \lambda_4 $ is only determined up to basis rotations. The eigenvalues $ \lambda_i $ for $ i > 6 $ are numerically zero. \exampleSymbol

\begin{figure}[tbh]
    \centering
    \begin{minipage}{0.32\textwidth}
        \centering
        \subfiguretitle{(a) $ \lambda_1 = 5.78 $}
        \includegraphics[width=0.9\textwidth]{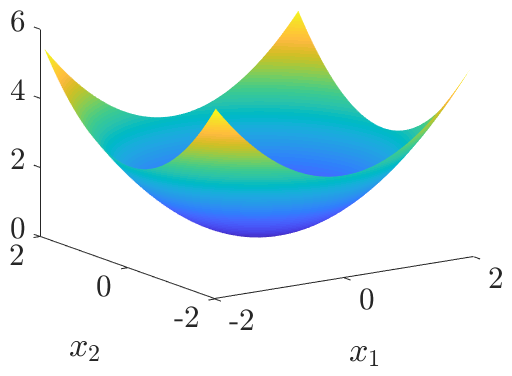} \\
        \subfiguretitle{(b) $ \lambda_2 = 3.56 $}
        \includegraphics[width=0.9\textwidth]{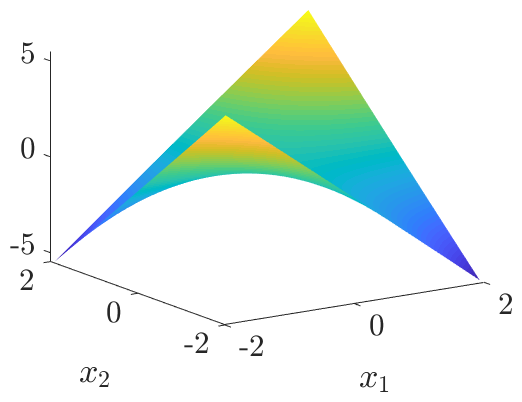}
    \end{minipage}
    \begin{minipage}{0.32\textwidth}
        \centering
        \subfiguretitle{(c) $ \lambda_3 = 2.69 $}
        \includegraphics[width=0.9\textwidth]{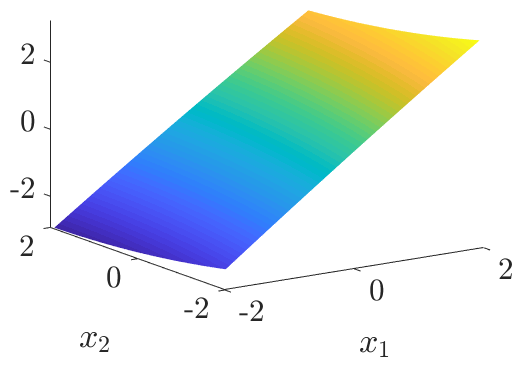} \\
        \subfiguretitle{(d) $ \lambda_4 = 2.66 $}
        \includegraphics[width=0.9\textwidth]{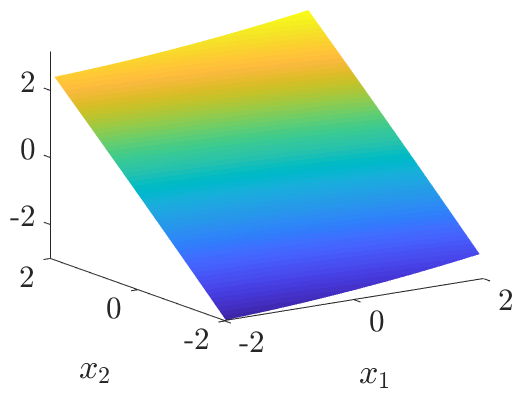}
    \end{minipage}
    \begin{minipage}{0.32\textwidth}
        \centering
        \subfiguretitle{(e) $ \lambda_5 = 1.46 $}
        \includegraphics[width=0.9\textwidth]{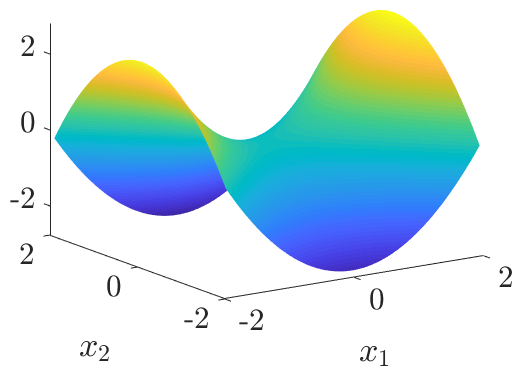} \\
        \subfiguretitle{(f) $ \lambda_6 = 0.24 $}
        \includegraphics[width=0.9\textwidth]{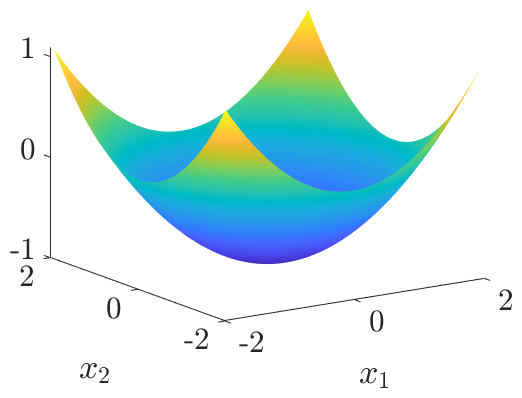}
    \end{minipage}
    \caption{Numerically computed eigenvalues and eigenfunctions of $ \ecov[XX] $ associated with the second-order polynomial kernel on $ \inspace = [-2, 2] \times [-2, 2] $.}
    \label{fig:C_xx}
\end{figure}

\end{example}

While we need the assumption that the eigenvalue $ \lambda $ of $ S $ is nonzero to infer the eigenvector of the auxiliary matrix from the eigenfunction from $ S $, this assumption is not needed the other way around.
This has the simple explanation that a rank deficiency of $ B $ always introduces a rank deficiency to $ S = \Upsilon B \Phi^\top $. On the other hand, if $ \mathscr{H} $ is infinite-dimensional, $ S $ as a finite-rank operator \emph{always} has a natural rank deficiency, even when $ B $ has full rank. In this case, $ S $ has the eigenvalue $ 0 $ while $ B $ does not.

In order to use Proposition \ref{prop:eigendecomposition} as a consistent tool to compute eigenfunctions of RKHS operators, we must ensure that all eigenfunctions corresponding to nonzero eigenvalues of empirical RKHS operators can be computed. In particular, we have to be certain that eigenvalues with a higher geometric multiplicity allow to capture a full set of linearly independent basis eigenfunctions in the associated eigenspace.

\begin{lemma} \label{lem:independent_eigenfunctions}
    Let $ S \colon \mathscr{H} \to \mathscr{H} $ with $ S = \Upsilon B \Phi^\top $ be an empirical RKHS operator. Then it holds:
    \begin{enumerate}[label=(\roman*)]
        \item If $ \mathbf{w}_1 \in \R^m $ and $ \mathbf{w}_2 \in \R^m $ are linearly independent eigenvectors of $ B \Phi^\top \Upsilon $, then $ \Upsilon \mathbf{w}_1 \in \mathscr{H} $ and $ \Upsilon \mathbf{w}_2 \in \mathscr{H} $ are linearly independent eigenfunctions of $ S $.
        \item If $ v_1 $ and $ v_2 $ are linearly independent eigenfunctions belonging to the eigenvalue $ \lambda \neq 0 $ of $ S $, then $ B \Phi^\top v_1 \in \R^m $ and $ B \Phi^\top v_2 \in \R^m $ are linearly independent eigenvectors of $ B \Phi^\top \Upsilon$.
    \end{enumerate}
    In particular, if $ \lambda \neq 0 $, then we have $ \mdim \mker (B \Phi^\top \Upsilon - \lambda \id_m) = \mdim \mker (S - \lambda \idop_\mathscr{H}) $.
\end{lemma}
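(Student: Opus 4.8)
The plan is to lean on Proposition~\ref{prop:eigendecomposition}, which has already produced the candidate eigenfunction $\Upsilon\mathbf{w}$ and the candidate eigenvector $B\Phi^\top v$; what remains is to propagate \emph{linear independence} through those two correspondences and then to package the two directions into the dimension identity. The whole argument rests on two observations: the linear map $\mathbf{w} \mapsto \Upsilon\mathbf{w}$ from $\R^m$ to $\mathscr{H}$ is injective (this is precisely the standing assumption that the columns of a feature matrix are linearly independent in the RKHS), and the map $v \mapsto B\Phi^\top v$ is injective on any eigenspace of $S$ belonging to a \emph{nonzero} eigenvalue.

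For part~(i): that each $\Upsilon\mathbf{w}_i$ is an eigenfunction of $S$ is Proposition~\ref{prop:eigendecomposition}(i). For linear independence I would take a relation $\alpha_1\Upsilon\mathbf{w}_1 + \alpha_2\Upsilon\mathbf{w}_2 = 0$, rewrite the left-hand side as $\Upsilon(\alpha_1\mathbf{w}_1 + \alpha_2\mathbf{w}_2)$ using linearity of feature-matrix multiplication, and invoke injectivity of $\Upsilon$ to get $\alpha_1\mathbf{w}_1 + \alpha_2\mathbf{w}_2 = 0 \in \R^m$; linear independence of $\mathbf{w}_1,\mathbf{w}_2$ then gives $\alpha_1=\alpha_2=0$. (This half uses only that $\Upsilon$ is one-to-one, not the eigenvector hypothesis.) For part~(ii): that each $B\Phi^\top v_i$ is an eigenvector of $B\Phi^\top\Upsilon$ for $\lambda$ is Proposition~\ref{prop:eigendecomposition}(ii). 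For linear independence, start from $\alpha_1 B\Phi^\top v_1 + \alpha_2 B\Phi^\top v_2 = 0$, i.e.\ $B\Phi^\top(\alpha_1 v_1 + \alpha_2 v_2) = 0$, and multiply from the left by $\Upsilon$ to obtain $S(\alpha_1 v_1 + \alpha_2 v_2) = \Upsilon B\Phi^\top(\alpha_1 v_1 + \alpha_2 v_2) = 0$; since $\alpha_1 v_1 + \alpha_2 v_2 \in \mker(S - \lambda\idop_\mathscr{H})$, the left-hand side equals $\lambda(\alpha_1 v_1 + \alpha_2 v_2)$, so $\lambda\neq 0$ forces $\alpha_1 v_1 + \alpha_2 v_2 = 0$ and then $\alpha_1=\alpha_2=0$ by linear independence of $v_1,v_2$.

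For the dimension identity I would observe that $\mathbf{w}\mapsto\Upsilon\mathbf{w}$ restricts to a well-defined linear map $\mker(B\Phi^\top\Upsilon - \lambda\id_m)\to\mker(S-\lambda\idop_\mathscr{H})$ and $v\mapsto B\Phi^\top v$ to a well-defined linear map in the opposite direction; well-definedness is the short associativity computation $S\Upsilon\mathbf{w} = \Upsilon(B\Phi^\top\Upsilon)\mathbf{w}$, respectively $(B\Phi^\top\Upsilon)(B\Phi^\top v) = B\Phi^\top(\Upsilon B\Phi^\top v)$, from Proposition~\ref{prop:eigendecomposition}, with the zero vector handled trivially. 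Parts~(i) and~(ii) say exactly that each of these maps sends linearly independent families to linearly independent families, hence each is injective. Since the domain of the first map has dimension at most $m$, injectivity of the second shows $\mker(S-\lambda\idop_\mathscr{H})$ is finite-dimensional, and two injective linear maps between finite-dimensional spaces in opposite directions force equality of dimensions. The argument is essentially routine; the only points that require care are using the hypothesis $\lambda\neq 0$ exactly where it is needed (for well-definedness and injectivity of $v\mapsto B\Phi^\top v$, and to rule out $B\Phi^\top v$ collapsing to the zero vector) and establishing finite-dimensionality of the $\lambda$-eigenspace of $S$ before comparing dimensions, which is why the $\R^m$ side of dimension $\leq m$ is brought in first.
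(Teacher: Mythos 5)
Your proposal is correct and takes essentially the same route as the paper: part (i) rests on the injectivity of $\mathbf{w} \mapsto \Upsilon\mathbf{w}$ coming from the standing linear-independence assumption (Remark~\ref{rem:independent_features}), part (ii) on the observation that $B\Phi^\top$ annihilating an element $v$ of the $\lambda$-eigenspace forces $\lambda v = Sv = 0$, and the dimension identity on combining the two correspondences from Proposition~\ref{prop:eigendecomposition}. The only difference is presentational: you argue directly, via a pair of injective linear maps between the two eigenspaces, where the paper phrases part (ii) and the dimension count as proofs by contradiction.
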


\begin{proof}
    The eigenvalue-eigenfunction correspondence is covered in Proposition \ref{prop:eigendecomposition}, it therefore remains to check the linear independence in statements (i) and (ii). Part (i) follows from Remark \ref{rem:independent_features}. We show part (ii) by contradiction: Let $ v_1 $ and $ v_2 $ be linearly independent eigenfunctions associated with the eigenvalue $ \lambda \neq 0 $ of $ S $. Then assume for some $ \alpha \neq 0 \in \R $, we have $ B \Phi^\top v_1 = \alpha B \Phi^\top v_2 $. Applying $ \Upsilon $ from the left to both sides, we obtain
    \begin{equation*}
    \Upsilon B \Phi^\top v_1 = S v_1 = \lambda v_1 =
    \alpha \lambda v_2 = \alpha S v_2 = \Upsilon \alpha B \Phi^\top v_2,
    \end{equation*}
    which contradicts the linear independence of $ v_1 $ and $ v_2 $. Therefore, $ B \Phi^\top v_1 $ and $ B \Phi^\top v_2$ have to be linearly independent in $ \R^m $.
    
    From (i) and (ii), we can directly infer $ \mdim \mker (B \Phi^\top \Upsilon - \lambda \id_m) = \mdim \mker (S - \lambda \idop_\mathscr{H}) $ by contradiction: Let $ \lambda \neq 0 $ be an eigenvalue of $ S $ and $ B \Phi^\top \Upsilon$. We assume that $ \mdim \mker (B \Phi^\top \Upsilon - \lambda I_m) > \mdim \mker (S - \lambda \idop_\mathscr{H}) $. This implies that there exist two eigenvectors $ \mathbf{w}_1, \mathbf{w}_2 \in \R^m $ of $ B \Phi^\top \Upsilon $ that generate two linearly dependent eigenfunctions $ \Upsilon \mathbf{w}_1, \Upsilon \mathbf{w}_2 \in \mathscr{H} $, contradicting statement (i). Hence, we must have $ \mdim \mker (B \Phi^\top \Upsilon - \lambda I_m) \leq \mdim \mker (S - \lambda \idop_\mathscr{H})$.  Analogously, applying the same logic to statement (ii), we obtain $ \mdim \mker (B \Phi^\top \Upsilon - \lambda I_m) \geq \mdim \mker (S - \lambda \idop_\mathscr{H}) $, which concludes the proof.
\end{proof}

\begin{corollary} \label{cor:auxiliary}
    If $ S = \Upsilon B \Phi^\top $ is an empirical RKHS operator and $ \lambda \in \R $ is nonzero, it holds that
    $ \{ \Upsilon \mathbf{w} \mid B \Phi^\top \Upsilon \mathbf{w} = \lambda \mathbf{w} \} =  \mker (S - \lambda \id_\mathscr{H}) $.
\end{corollary}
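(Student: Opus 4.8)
The plan is to identify both sides as finite-dimensional linear subspaces of $\mathscr{H}$, establish one inclusion, and then match dimensions. Write $N_m := \mker(B\Phi^\top\Upsilon - \lambda\id_m) \subseteq \R^m$ and $N_\mathscr{H} := \mker(S - \lambda\id_\mathscr{H})$; the left-hand side of the claimed identity is exactly the image $\Upsilon N_m := \{\Upsilon\mathbf{w} \mid \mathbf{w}\in N_m\}$, which is a linear subspace of $\mathscr{H}$ because it is the image of the subspace $N_m$ under the linear map $\mathbf{w}\mapsto\Upsilon\mathbf{w}$.

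First I would record the inclusion $\Upsilon N_m \subseteq N_\mathscr{H}$. For $\mathbf{w}\in N_m\setminus\{0\}$ this is precisely Proposition~\ref{prop:eigendecomposition}(i), which gives $S(\Upsilon\mathbf{w}) = \lambda\,\Upsilon\mathbf{w}$; the vector $\mathbf{w}=0$ contributes $0\in N_\mathscr{H}$ trivially. Next I would compare dimensions. The map $\mathbf{w}\mapsto\Upsilon\mathbf{w}$ is injective on all of $\R^m$ since the entries of the feature matrix $\Upsilon$ are linearly independent in $\mathscr{H}$, so restricting it to $N_m$ yields $\mdim(\Upsilon N_m) = \mdim N_m$. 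By the final identity in Lemma~\ref{lem:independent_eigenfunctions}, $\mdim N_m = \mdim N_\mathscr{H}$, and $N_\mathscr{H}$ is finite-dimensional because $Sv = \lambda v$ with $\lambda\neq 0$ forces $v = \lambda^{-1}Sv \in \mim(S)$, while $\mim(S)$ is finite-dimensional by Proposition~\ref{prop:rkhs_operator_properties}(i). A finite-dimensional subspace contained in a subspace of the same dimension must coincide with it, which gives $\Upsilon N_m = N_\mathscr{H}$, i.e., the claim. This argument also covers the degenerate case in which $\lambda$ is not an eigenvalue of $S$: then $\mdim N_\mathscr{H}=0$, so $N_m=\{0\}$ and both sides equal $\{0\}$.

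I do not anticipate a real obstacle: the corollary is essentially a bookkeeping consequence of Proposition~\ref{prop:eigendecomposition} together with the dimension count in Lemma~\ref{lem:independent_eigenfunctions}. The only points needing a moment of care are invoking finite-rankness of $S$ so that ``inclusion plus equal dimension implies equality'' is legitimate, and observing that the literal statement of Proposition~\ref{prop:eigendecomposition}(i) speaks of eigenfunctions (hence nonzero vectors), so the $\mathbf{w}=0$ case must be dispatched separately, though it is immediate.
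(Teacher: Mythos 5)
Your argument is correct and is in the spirit of how the paper presents the statement, namely as a direct consequence of Proposition~\ref{prop:eigendecomposition} and Lemma~\ref{lem:independent_eigenfunctions}: you take the inclusion $\{\Upsilon\mathbf{w} \mid B\Phi^\top\Upsilon\mathbf{w}=\lambda\mathbf{w}\}\subseteq \mker(S-\lambda\id_\mathscr{H})$ from part (i) of the proposition and then upgrade it to an equality by matching dimensions via the final identity of the lemma, using injectivity of $\mathbf{w}\mapsto\Upsilon\mathbf{w}$ and finite-dimensionality of the eigenspace (which you justify correctly through $v=\lambda^{-1}Sv\in\mim(S)$ and the finite rank of $S$). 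The one remark worth making is that the dimension count is not actually needed: for the reverse inclusion you can argue element-wise, since $Sv=\lambda v$ with $\lambda\neq 0$ gives $v=\lambda^{-1}\Upsilon B\Phi^\top v=\Upsilon\mathbf{w}$ with $\mathbf{w}:=\lambda^{-1}B\Phi^\top v$, and Proposition~\ref{prop:eigendecomposition}(ii) shows that $\mathbf{w}$ satisfies $B\Phi^\top\Upsilon\mathbf{w}=\lambda\mathbf{w}$; this exhibits every eigenfunction explicitly in the required form, avoids invoking Lemma~\ref{lem:independent_eigenfunctions} and the finite-rank consideration altogether, and also hands you the concrete preimage $\mathbf{w}$, which is useful in computations. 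Your handling of the degenerate cases ($\mathbf{w}=0$ and $\lambda$ not an eigenvalue) is careful and fine; the only cost of your route is the extra bookkeeping it requires, not any gap in validity.
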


The corollary justifies to refer to the eigenvalue problems $ S v = \lambda v $ as \emph{primal problem} and $ B \Phi^\top \Upsilon \mathbf{w} = \lambda \mathbf{w} $ as \emph{auxiliary problem}, respectively.

\subsection{Singular value decomposition via auxiliary problem}
\label{sec:SVD}

We have seen that we can compute eigenfunctions corresponding to nonzero eigenvalues of empirical RKHS operators. This can be extended in a straightforward fashion to the singular value decomposition of such operators.

\subsubsection{Standard derivation}

We apply the eigendecomposition to the self-adjoint operator $ S^*S $ to obtain the singular value decomposition of $ S $.

\begin{proposition} \label{prop:svd}
Let $ S \colon \mathscr{H} \to \mathscr{F} $ with $ S = \Psi B \Phi^\top $  be an empirical RKHS operator, where $ \Phi = [\phi(x_1), \dots, \phi(x_m)] $, $ \Psi = [\psi(y_1), \dots, \psi(y_n)]$, and $ B \in \R^{n \times m} $. 
Assume that the multiplicity of each singular value of $S$ is $1$.
Then the SVD of $ S $ is given by
\begin{equation*}
    S = \sum_{i=1}^r \lambda_i^{1/2} (u_i \otimes v_i),
\end{equation*}
where
\begin{align*}
  v_i &:= (\mathbf{w}_i^\top G_\Phi \mathbf{w}_i)^{-1/2} \, \Phi \mathbf{w}_i, \\
  u_i &:= \lambda_i^{-1/2} S v_i,
\end{align*}
with the nonzero eigenvalues $ \lambda_1, \dots, \lambda_r \in \R $
of the matrix
\begin{align*}
  M G_{\Phi} \in \R^{m \times m} \quad \textit{with} \quad M:= B^\top G_{\Psi} B \in \R^{m \times m}
\end{align*}
counted with their multiplicities and corresponding eigenvectors $ \mathbf{w}_1, \dots, \mathbf{w}_r \in \R^m $.
\end{proposition}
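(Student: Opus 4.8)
The plan is to obtain the SVD of $S$ from the eigendecomposition of the self-adjoint operator $S^*S$ via Lemma~\ref{lem:eigendecomposition_svd}, computing that eigendecomposition by recognizing $S^*S$ as an empirical RKHS operator on $\mathscr{H}$ and applying Proposition~\ref{prop:eigendecomposition}. Using $S^* = \Phi B^\top \Psi^\top$ from Proposition~\ref{prop:rkhs_operator_properties}(ii) and $\Psi^\top\Psi = G_\Psi$, one has
\begin{equation*}
    S^*S = \Phi B^\top \Psi^\top \Psi B \Phi^\top = \Phi\,(B^\top G_\Psi B)\,\Phi^\top = \Phi M \Phi^\top .
\end{equation*}
Hence $S^*S$ has exactly the form $\Upsilon B' \Phi^\top$ of Proposition~\ref{prop:eigendecomposition} with $\Upsilon = \Phi$ and $B' = M$, and its auxiliary matrix is $M\Phi^\top\Phi = M G_\Phi$.

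Proposition~\ref{prop:eigendecomposition} then tells us that the nonzero eigenvalues of $S^*S$ are precisely the nonzero eigenvalues $\lambda_1,\dots,\lambda_r$ of $M G_\Phi$, with each eigenvector $\mathbf{w}_i$ of $M G_\Phi$ giving the eigenfunction $\Phi\mathbf{w}_i$ of $S^*S$; these $\Phi\mathbf{w}_i$ are nonzero because the columns of $\Phi$ are linearly independent. Since $S^*S$ is positive semi-definite, each $\lambda_i>0$, so $\lambda_i^{1/2}$ is well defined, and since $G_\Phi$ is positive definite we have $\norm{\Phi\mathbf{w}_i}_\mathscr{H}^2 = \mathbf{w}_i^\top G_\Phi\mathbf{w}_i>0$; normalizing therefore produces exactly $v_i = (\mathbf{w}_i^\top G_\Phi\mathbf{w}_i)^{-1/2}\Phi\mathbf{w}_i$. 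Feeding $\{\lambda_i,v_i\}$ into Lemma~\ref{lem:eigendecomposition_svd} yields $u_i = \lambda_i^{-1/2}Sv_i$ and $S = \sum_{i=1}^r \lambda_i^{1/2}(u_i\otimes v_i)$, which is the asserted decomposition (an explicit matrix expression $u_i = \lambda_i^{-1/2}(\mathbf{w}_i^\top G_\Phi\mathbf{w}_i)^{-1/2}\Psi B G_\Phi\mathbf{w}_i$ follows from $\Phi^\top\Phi = G_\Phi$).

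The step that needs the most care is completeness: one must check that the matrix eigenproblem $M G_\Phi\mathbf{w} = \lambda\mathbf{w}$ captures \emph{every} nonzero eigenvalue of $S^*S$ with the correct multiplicity, so that the finite sum really is the full SVD with $r=\rank(B)$, and that the resulting $v_i$ form an orthonormal system as an SVD requires. The multiplicity bookkeeping is supplied by Lemma~\ref{lem:independent_eigenfunctions}, which gives $\mdim\mker(M G_\Phi - \lambda I_m) = \mdim\mker(S^*S - \lambda\idop_\mathscr{H})$ for all $\lambda\neq 0$; the standing hypothesis that every singular value of $S$ is simple then makes every nonzero eigenvalue of $S^*S$ (and of $M G_\Phi$) simple, so the $v_i$ belong to distinct eigenvalues of the self-adjoint operator $S^*S$, are automatically pairwise orthogonal, and hence (being unit vectors) orthonormal. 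For well-posedness of the matrix problem it is also convenient to note that $M G_\Phi$ is similar to the symmetric positive semi-definite matrix $G_\Phi^{1/2}M G_\Phi^{1/2} = (G_\Psi^{1/2}B G_\Phi^{1/2})^\top(G_\Psi^{1/2}B G_\Phi^{1/2})$, so it is diagonalizable with nonnegative real spectrum and $\rank(M G_\Phi)=\rank(B)$. No genuinely new estimate is required; the work is in assembling Lemma~\ref{lem:eigendecomposition_svd}, Proposition~\ref{prop:eigendecomposition}, and Lemma~\ref{lem:independent_eigenfunctions} consistently.
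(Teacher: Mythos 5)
Your proposal is correct and follows essentially the same route as the paper: identify $S^*S = \Phi M \Phi^\top$ as an empirical RKHS operator, solve the auxiliary eigenproblem $M G_\Phi \mathbf{w} = \lambda \mathbf{w}$ (via Proposition~\ref{prop:eigendecomposition}/Corollary~\ref{cor:auxiliary}), normalize to get the $v_i$, and invoke Lemma~\ref{lem:eigendecomposition_svd} to pass to the SVD of $S$. Your extra care about multiplicities via Lemma~\ref{lem:independent_eigenfunctions} and the orthonormality of the $v_i$ under the simplicity assumption is a welcome elaboration of details the paper leaves implicit, but it is not a different argument.
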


\begin{proof}
Using Proposition~\ref{prop:rkhs_operator_properties}, the operator
\begin{equation*}
    S^*S = \Phi (B^\top G_{\Psi} B) \Phi^\top = \Phi M \Phi^\top
\end{equation*}
is an empirical RKHS operator on $ \mathscr{H} $. Naturally, $ S^*S $ is also positive and self-adjoint. We apply Corollary~\ref{cor:auxiliary} to calculate the normalized eigenfunctions
\begin{equation*}
    v_i := \norm{\Phi \mathbf{w}_i}_{\mathscr{H}}^{-1} \Phi \mathbf{w}_i
         = (\mathbf{w}_i^\top G_{\Phi} \mathbf{w}_i)^{-1/2} \, \Phi \mathbf{w}_i
\end{equation*}
of $ S^*S $ by means of the auxiliary problem
\begin{equation*}
    M G_{\Phi} \mathbf{w}_i = \lambda_i \mathbf{w}_i, \quad \mathbf{w}_i \in \R^m,
\end{equation*}
for nonzero eigenvalues $ \lambda_i $. We use Lemma~\ref{lem:eigendecomposition_svd} to establish the connection between the eigenfunctions of $ S^*S $ and singular functions of $ S $ and obtain the desired form for the SVD of $ S $.
\end{proof}

\begin{remark}
    Whenever the operator $S$ possesses singular values with multiplicities larger than
    $1$, a Gram-Schmidt procedure may need to be applied to the resulting 
    singular functions in order to ensure that they form an orthonormal system in the
    corresponding eigenspaces of $S^*S$ and $SS^*$.
\end{remark}

\begin{remark}
As described in Remark~\ref{rem:eigendecomposition_reformulation}, several different auxiliary problems to compute the eigendecomposition of $ S^*S $ can be derived. As a result, we can reformulate the calculation of the SVD of $ S $ for every possible auxiliary problem.
\end{remark}

\begin{example} \label{ex:C_yx}
We define a probability density on $ \R^2 $ by
\begin{equation*}
    p(x, y) = \frac{1}{2} \big(p_1(x) \tsp p_2(y) + p_2(x) \tsp p_1(y)\big),
\end{equation*}
with
\begin{equation*}
    p_1(x) = \tfrac{1}{\sqrt{2 \pi \rho^2}} \tsp e^{-\frac{(x-1)^2}{2 \rho^2 \vphantom{X^X}}}
    \quad \text{and} \quad
    p_2(x) = \tfrac{1}{\sqrt{2 \pi \rho^2}} \tsp e^{-\frac{(x+1)^2}{2 \rho^2 \vphantom{X^X}}},
\end{equation*}
see Figure~\ref{fig:C_yx}(a), and draw $ m = n = 10000 $ test points $ (x_i, y_i) $ from this density as shown in Figure~\ref{fig:C_yx}(b). Let us now compute the singular value decomposition of $ \ecov[YX] = \frac{1}{m} \Psi \tsp \Phi^\top $, i.e., $ B = \frac{1}{m} I_m $. That is, we have to compute the eigenvalues and eigenvectors of the auxiliary matrix $ \frac{1}{m^2} \gram[\Psi] \tsp \gram[\Phi] $. Using the normalized Gaussian kernel with bandwidth $ 0.1 $ results in singular values $ \sigma_1 \approx 0.47 $ and $ \sigma_2 \approx 0.43 $ and the corresponding right and left singular functions displayed in Figure~\ref{fig:C_yx}(c) and Figure~\ref{fig:C_yx}(d). The subsequent singular values are close to zero. Thus, we can approximate $ \ecov[YX] $ by a rank-two operator of the form $ \ecov[YX] \approx \sigma_1 (u_1 \otimes v_1) + \sigma_2 (u_2 \otimes v_2) $, see also Figure~\ref{fig:C_yx}(e) and Figure~\ref{fig:C_yx}(f). This is due to the decomposability of the probability density $ p(x, y) $. \exampleSymbol

\begin{figure}[tbh]
    \centering
    \begin{minipage}{0.32\textwidth}
        \centering
        \subfiguretitle{(a)}
        \includegraphics[width=\textwidth]{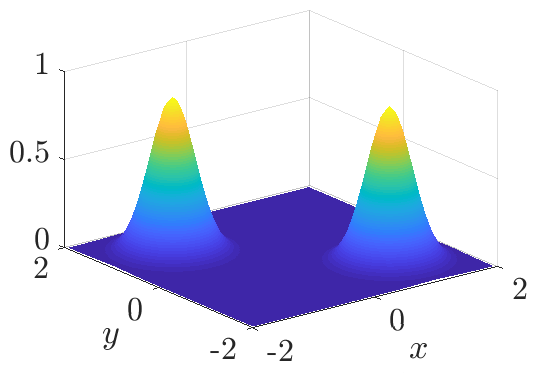} \\
        \subfiguretitle{(b)}
        \includegraphics[width=\textwidth]{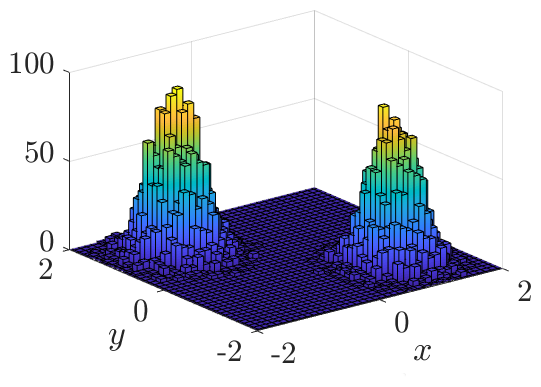}
    \end{minipage}
    \begin{minipage}{0.32\textwidth}
        \centering
        \subfiguretitle{(c)}
        \includegraphics[width=0.9\textwidth]{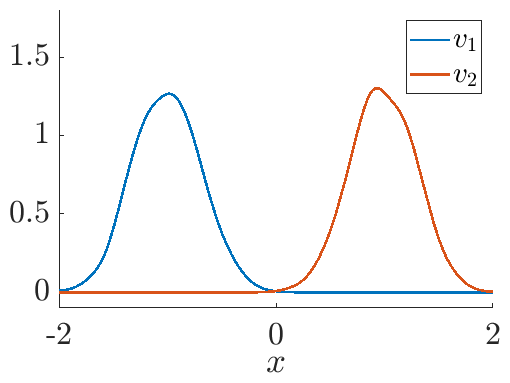} \\
        \subfiguretitle{(d)}
        \includegraphics[width=0.9\textwidth]{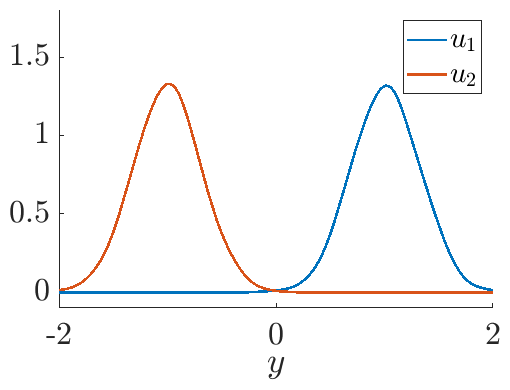}
    \end{minipage}
    \begin{minipage}{0.32\textwidth}
        \centering
        \subfiguretitle{(e)}
        \includegraphics[width=\textwidth]{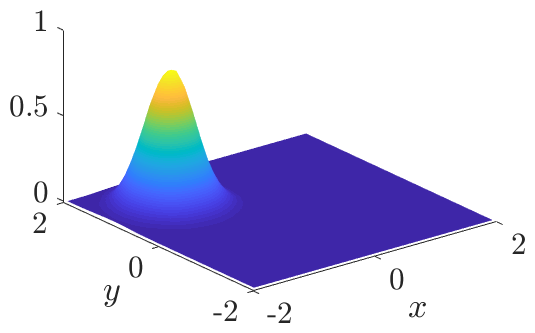} \\
        \subfiguretitle{(f)}
        \includegraphics[width=\textwidth]{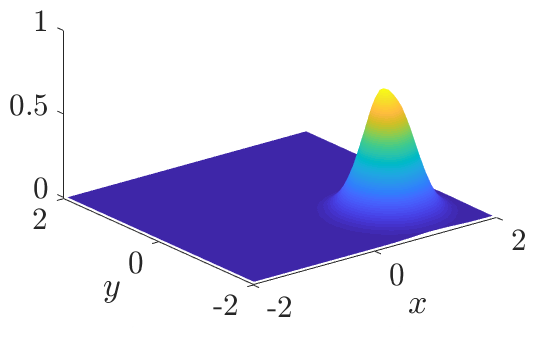}
    \end{minipage}
    \caption{Numerically computed singular value decomposition of $ \ecov[YX] $. (a) Joint probability density $ p(x, y) $. (b) Histogram of the 10000 sampled data points. (c) First two right singular functions. (d) First two left singular functions. (e) $ \sigma_1 (u_1 \otimes v_1) $. (f) $ \sigma_2 (u_2 \otimes v_2) $.}
    \label{fig:C_yx}
\end{figure}

\end{example}

With the aid of the singular value decomposition, we are now, for instance, able to compute low-rank approximations of RKHS operators---e.g., to obtain more compact and smoother representations---or their pseudoinverses. This will be described below. First, however, we show an alternative derivation of the decomposition. Proposition~\ref{prop:svd} gives a numerically computable form of the SVD of the empirical RKHS operator~$ S $. Since the auxiliary problem of the eigendecomposition of $ S^*S $ involves several matrix multiplications, the problem might become ill-conditioned.

\subsubsection{Block-operator formulation}

We now employ the relationship described in Corollary~\ref{cor:block_operator_svd} between the SVD of the empirical RKHS operator $ S \colon \mathscr{H} \rightarrow \mathscr{F} $ and the eigendecomposition of the block-operator $ T \colon \mathscr{F} \oplus \mathscr{H} \rightarrow \mathscr{F} \oplus \mathscr{H} $, with
$ (f,h) \mapsto (Sh,S^* f) $.

\begin{theorem}
  The SVD of the empirical RKHS operator $S = \Psi B \Phi^\top$ is given by
  \begin{equation*}
    S = \sum\limits_{i \in I}^r
    \sigma_i \left[ \left(\norm{\Psi \mathbf{w}_i}_{\mathscr{F}}^{-1} \Psi \mathbf{w}_i \right) \otimes \left(\norm{\Phi \mathbf{z}_i}_{\mathscr{H}}^{-1} \Phi \mathbf{z}_i \right) \right],
  \end{equation*}
  where $\sigma_i$ are the strictly positive eigenvalues and
  $
    \begin{bsmallmatrix}
      \mathbf{w}_i \\ \mathbf{z}_i
    \end{bsmallmatrix} \in \R^{n + m}
  $ the corresponding eigenvectors of the auxiliary matrix
  \begin{equation} \label{eq:block_aux}
    \begin{bmatrix}
      0 & B G_{\Phi} \\
      B^\top G_{\Psi} & 0
    \end{bmatrix} \in \R^{(n + m) \times (n + m)}.
  \end{equation}
\end{theorem}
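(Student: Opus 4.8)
The plan is to recognize the block-operator $T\colon\mathscr{F}\oplus\mathscr{H}\to\mathscr{F}\oplus\mathscr{H}$ with $(f,h)\mapsto(Sh,S^*f)$ as an empirical RKHS operator on the Hilbert space $\mathscr{F}\oplus\mathscr{H}$, to compute its eigendecomposition through the auxiliary problem of Proposition~\ref{prop:eigendecomposition}, and then to translate the strictly positive part of that eigendecomposition into the SVD of $S$ by means of Corollary~\ref{cor:block_operator_svd}.

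First, using $S^*=\Phi B^\top\Psi^\top$ from Proposition~\ref{prop:rkhs_operator_properties}, I would write $T(f,h)=(\Psi B\Phi^\top h,\,\Phi B^\top\Psi^\top f)$ and introduce the combined feature matrix
\begin{equation*}
  \Xi:=\begin{bmatrix}\Psi & 0\\ 0 & \Phi\end{bmatrix}\in(\mathscr{F}\oplus\mathscr{H})^{n+m},
\end{equation*}
that is, the tuple whose first $n$ entries are $(\psi(y_j),0)$ and whose last $m$ entries are $(0,\phi(x_i))$. Because the $\psi(y_j)$ and the $\phi(x_i)$ are linearly independent and the two families lie in complementary summands, the entries of $\Xi$ are linearly independent, so the eigendecomposition machinery of Proposition~\ref{prop:eigendecomposition} and Lemma~\ref{lem:independent_eigenfunctions} applies to $T$ verbatim (only linear independence of the columns of the feature matrix is used in their proofs). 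A short computation with $\innerprod{\cdot}{\cdot}_{\oplus}$ gives $\Xi^\top(f,h)=\big(\Psi^\top f,\ \Phi^\top h\big)^\top$ and hence the Gram matrix $\Xi^\top\Xi=\begin{bmatrix}\gram[\Psi] & 0\\ 0 & \gram[\Phi]\end{bmatrix}$. Setting $\mathcal{B}:=\begin{bmatrix}0 & B\\ B^\top & 0\end{bmatrix}\in\R^{(n+m)\times(n+m)}$, evaluating $\Xi\,\mathcal{B}\,\Xi^\top$ on an arbitrary $(f,h)$ and using the identity for $\Xi^\top(f,h)$ reproduces $(\Psi B\Phi^\top h,\,\Phi B^\top\Psi^\top f)$, so $T=\Xi\,\mathcal{B}\,\Xi^\top$; that is, $T$ is the empirical RKHS operator on $\mathscr{F}\oplus\mathscr{H}$ with domain and range feature matrix $\Xi$ and representation matrix $\mathcal{B}$.

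Next I would apply Proposition~\ref{prop:eigendecomposition} and Lemma~\ref{lem:independent_eigenfunctions} to $T=\Xi\mathcal{B}\Xi^\top$. The associated auxiliary matrix is
\begin{equation*}
  \mathcal{B}\,\Xi^\top\Xi
  =\begin{bmatrix}0 & B\\ B^\top & 0\end{bmatrix}\begin{bmatrix}\gram[\Psi] & 0\\ 0 & \gram[\Phi]\end{bmatrix}
  =\begin{bmatrix}0 & B\gram[\Phi]\\ B^\top\gram[\Psi] & 0\end{bmatrix},
\end{equation*}
which is exactly \eqref{eq:block_aux}. By Proposition~\ref{prop:eigendecomposition} every eigenvector $(\mathbf{w}_i,\mathbf{z}_i)^\top$ of this matrix for an eigenvalue $\sigma_i$ yields the eigenfunction $\Xi(\mathbf{w}_i,\mathbf{z}_i)^\top=(\Psi\mathbf{w}_i,\Phi\mathbf{z}_i)$ of $T$, and by Lemma~\ref{lem:independent_eigenfunctions} the nonzero eigenvalues of the matrix and of $T$ coincide with equal multiplicities. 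Restricting to the strictly positive eigenvalues $\sigma_i$ and invoking Corollary~\ref{cor:block_operator_svd}, each $\sigma_i$ is then a singular value of $S$ with left singular vector $\norm{\Psi\mathbf{w}_i}_{\mathscr{F}}^{-1}\Psi\mathbf{w}_i$ and right singular vector $\norm{\Phi\mathbf{z}_i}_{\mathscr{H}}^{-1}\Phi\mathbf{z}_i$; note that rescaling $(\mathbf{w}_i,\mathbf{z}_i)^\top$ multiplies both normalized vectors by the same sign, so the summand $\sigma_i\big[(\norm{\Psi\mathbf{w}_i}_{\mathscr{F}}^{-1}\Psi\mathbf{w}_i)\otimes(\norm{\Phi\mathbf{z}_i}_{\mathscr{H}}^{-1}\Phi\mathbf{z}_i)\big]$ is unambiguous. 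Finally, to see that this exhausts the SVD, I would use $\rank S=\rank B=r$ from Proposition~\ref{prop:rkhs_operator_properties} together with Lemma~\ref{lem:block_operator_svd}: the positive eigenvalues of $T$, hence of \eqref{eq:block_aux}, are precisely the $r$ singular values $\sigma_1,\dots,\sigma_r$ of $S$ with the right multiplicities, so choosing the $(\mathbf{w}_i,\mathbf{z}_i)^\top$ to be linearly independent — and Gram--Schmidt-orthonormalizing within an eigenspace if some $\sigma_i$ is repeated — yields complete left and right singular systems and the claimed formula for $S$.

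The two block-matrix identities ($\Xi^\top\Xi$ and $T=\Xi\mathcal{B}\Xi^\top$) are routine. The step that needs care is the last one: confirming that the auxiliary eigenproblem \eqref{eq:block_aux} recovers every positive singular value of $S$ the correct number of times and that normalizing the two blocks of each eigenvector yields honest orthonormal singular systems. This is exactly what the combination of Lemma~\ref{lem:block_operator_svd}, Lemma~\ref{lem:independent_eigenfunctions} and Corollary~\ref{cor:block_operator_svd} delivers, once the small sign-consistency observation above is in place.
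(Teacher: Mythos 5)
Your proof is correct and follows essentially the same route as the paper: you realize the block operator $T$ as an empirical RKHS operator on $\mathscr{F}\oplus\mathscr{H}$, read off the auxiliary matrix \eqref{eq:block_aux} via Corollary~\ref{cor:auxiliary}, and pass to the SVD of $S$ through Corollary~\ref{cor:block_operator_svd}. The only difference is that you make explicit what the paper leaves as a cautionary remark — the embedding of the feature elements as $(\psi(y_j),0)$ and $(0,\phi(x_i))$ in the external direct sum, which justifies the block-diagonal Gram matrix — and you add the completeness and multiplicity bookkeeping via Lemma~\ref{lem:block_operator_svd} and Lemma~\ref{lem:independent_eigenfunctions}, which the paper's terse proof omits.
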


\begin{proof}
  The operator $ T $ defined above can be written in block form as
  \begin{equation} \label{eq:rkhs_blockoperator}
    T \begin{bmatrix}
        f \\ h
      \end{bmatrix} =
    \begin{bmatrix}
       & S \\
       S^* &
    \end{bmatrix}
    \begin{bmatrix}
      f \\ h
    \end{bmatrix} =
    \begin{bmatrix}
      S h \\
      S^* f
    \end{bmatrix}.
  \end{equation}
  By introducing the block feature matrix $ \Lambda := \left[\, \Psi \; \Phi \,\right] $, we may rewrite \eqref{eq:rkhs_blockoperator} as the empirical RKHS operator
  \begin{align*}
    \Lambda
    \begin{bmatrix}
       0 & B \\
       B^\top & 0
    \end{bmatrix}
    \Lambda^\top.
  \end{align*}
  Invoking Corollary \ref{cor:auxiliary} yields the auxiliary problem
  \begin{align*}
    \begin{bmatrix}
      0 & B \\
      B^\top & 0
    \end{bmatrix}
    \Lambda^\top \Lambda
    =
    \begin{bmatrix}
        0 & B \\
        B^\top & 0
      \end{bmatrix}
    \begin{bmatrix}
        G_{\Psi} & 0 \\
        0 & G_{\Phi}
    \end{bmatrix}
    =
    \begin{bmatrix}
      0 & B G_\Phi \\
      B^\top G_\Psi & 0
    \end{bmatrix} \in \R^{(n + m) \times (n + m)}
  \end{align*}
  for the eigendecomposition of $T$. We again emphasize
  that the block-operator notation has to be used with
  caution since $\mathscr{F} \oplus \mathscr{H}$ is an external direct sum.
  We use Corollary~\ref{cor:block_operator_svd} to obtain the SVD of $S$ from the eigendecomposition of $T$.
\end{proof}

\begin{remark}
In matrix analysis and numerical linear algebra, one often computes the SVD of a matrix $A \in \R^{n \times m}$ through an eigendecomposition of the matrix $\begin{bsmallmatrix} 0 & A \\ A^\top & 0 \end{bsmallmatrix}$. This leads to a symmetric problem, usually simplifying iterative SVD schemes~\cite{Golub13}. The auxiliary problem~\eqref{eq:block_aux}, however, is in general not symmetric.
\end{remark}

\section{Applications}
\label{sec:Applications}

In this section, we describe different operators of the form $ S = \Psi B \Phi^\top $ or $ S = \Phi B \Psi^\top $, respectively, and potential applications. All of the presented examples are empirical estimates of Hilbert--Schmidt RKHS operators. Therefore, the SVD of the given empirical RKHS operators converges to the SVD of their analytical counterparts.
For results concerning the convergence and consistency of the estimators, we refer to~\cite{Gruen12,Fukumizu13:KBR,Fukumizu15:NBI,Park2020MeasureTheoretic}.
Note that in practice the examples below may bear additional
challenges such as ill-posed inverse problems and regularization of compact operators, which we will not examine in detail.
We will also not cover details such as measurability of feature maps and properties of related integral operators in what follows.
For these details, the reader may consult, for example,~\cite{StCh08}.

\subsection{Low-rank approximation, pseudoinverse and optimization}

With the aid of the SVD it is now also possible to compute low-rank approximations of RKHS operators. This well-known result is called \emph{Eckart--Young theorem} or \emph{Eckart--Young--Mirsky theorem}, stating that the finite-rank operator given by the truncated SVD
\begin{equation*}
A_k := \sum_{i=1}^k \sigma_i (u_i \otimes v_i)
\end{equation*}
satisfies the optimality property
\begin{equation*}
A_k = \argmin_{\mathclap{\rank(B) = k}} \norm{A-B}_\HS,
\end{equation*}
see~\cite{Eubank15} for details. Another application is the computation of the
(not necessarily globally defined) \emph{pseudoinverse} or \emph{Moore--Penrose inverse}~\cite{EHN96} of operators, defined as $ A^+ \colon \mathscr{F} \supseteq \mathrm{dom}(A^+) \rightarrow \mathscr{H} $, with
\begin{equation*}
A^+ = \sum\limits_{i = 1}^r \sigma_i^{-1} (v_i \otimes u_i).
\end{equation*}
We can thus obtain the solution $ x \in H $ of the---not necessarily well-posed---inverse problem $ A x = y $ for $ y \in \mathscr{F} $ through the Moore--Penrose pseudoinverse, i.e.,
\begin{equation*}
A^+ y = \argmin_{\mathclap{x \in \mathscr{H}}} \norm{Ax - y}_\mathscr{F},
\end{equation*}
where $ A^+ y $ in $ \mathscr{H} $ is the unique minimizer with minimal norm. For the 
connection to regularized least-squares problems and the theory of inverse problems, see~\cite{EHN96}.

\subsection{Kernel covariance and cross-covariance operator}

The \emph{kernel covariance operator} $ \cov[XX] \colon \mathscr{H} \to \mathscr{H} $ and the \emph{kernel cross-covariance operator}~\cite{Baker1973} $ \cov[YX] \colon \mathscr{H} \to \mathscr{F} $ are defined by
\begin{alignat*}{4}
    \cov[XX] &= \int \phi(X) \otimes \phi(X) \tsp \dd \pp{P}(X)
             &&= \mathbb{E}_{\scriptscriptstyle X}[\phi(X) \otimes \phi(X)], \\
    \cov[YX] &= \int \psi(Y) \otimes \phi(X) \tsp \dd \pp{P}(Y,X)
             &&= \mathbb{E}_{\scriptscriptstyle \mathit{YX}}[\psi(Y) \otimes \phi(X)],
\end{alignat*}
assuming that the second moments (in the Bochner integral sense) of the embedded random
variables $X,Y$ exist.
Kernel (cross-)covariance operators can be regarded as generalizations of (cross\nobreakdash-)covariance matrices and are frequently used in nonparametric statistical methods, see \cite{MFSS17} for an overview. Given training data $ \mathbb{D}_{\scriptscriptstyle XY} = \{(x_1, y_1), \dots, (x_n, y_n)\} $ drawn i.i.d.\ from the joint probability distribution $ \pp{P}(X, Y) $, we can estimate these operators by
\begin{equation*}
    \ecov[XX] = \frac{1}{n} \sum_{i=1}^n \phi(x_i) \otimes \phi(x_i)
              = \frac{1}{n} \Phi \Phi^\top
    \quad \text{and} \quad
    \ecov[YX] = \frac{1}{n} \sum_{i=1}^n \psi(y_i)\otimes\phi(x_i)
              = \frac{1}{n} \Psi \Phi^\top.
\end{equation*}
Thus, $ \ecov[XX] $ and $ \ecov[YX] $ are empirical RKHS operators with $ B = \frac{1}{n} I_n $, where $ \Psi = \Phi $ for $ \ecov[XX] $. Decompositions of these operators are demonstrated in Example~\ref{ex:C_xx} and Example~\ref{ex:C_yx}, respectively, where we show that we can compute approximations of the Mercer feature space and obtain low-rank approximations of operators.

\subsection{Conditional mean embedding}

The conditional mean embedding is an extension of the mean embedding framework to conditional probability distributions. Under some technical assumptions, the RKHS embedding of a 
conditional distribution can be represented as a linear operator~\cite{SHSF09}.
We will not cover the technical details here and refer the reader to~\cite{Klebanov2019rigorous} for
the mathematical background. We note that alternative interpretations of
the conditional mean embedding exist in a least-squares context which needs less assumptions
than the operator-theoretic formulation~\cite{Gruen12,Park2020MeasureTheoretic}.
\begin{remark}
For simplicity, we write $\cov[XX]^{-1}$ for the inverse covariance operator
in what follows. However, note that $\cov[XX]^{-1}$ does in general not exist as a globally defined bounded operator --in practice, a Tikhonov-regularized inverse (i.e., $(\cov[XX] + \epsilon \mathrm{Id})^{-1}$ for some $\epsilon>0$) is usually considered instead
(see~\cite{EHN96} for details), leading to regularized matrices in the empirical versions.
\end{remark}

The conditional mean embedding operator of $ \pp{P}(Y \mid X) $ is given by
\begin{equation*}
    \cme = \cov[YX] \tsp \cov[XX]^{-1}.
\end{equation*}
Note that when the joint distribution $\pp{P}(X,Y)$ and hence
$\cov[XX]$ and $\cov[YX]$ are unknown,
we can not compute $\cme$ directly.
However, if the training data $ \mathbb{D}_{\scriptscriptstyle XY} =
\{(x_1, y_1), \dots, (x_n, y_n)\} $ is drawn i.i.d.\ from the probability distribution $ \pp{P}(X,Y) $,
it can be estimated as
\begin{equation*}
    \ecme = \Psi \tsp \gram[\phi]^{-1} \tsp \Phi^\top.
\end{equation*}
This is an empirical RKHS operator, where $ B = \gram[\phi]^{-1} $.
The conditional mean operator is often used for nonparametric models, for example in state-space models~\cite{SHSF09}, filtering and Bayesian inference~\cite{Fukumizu13:KBR,Fukumizu15:NBI}, reinforcement learning~\cite{LeverEtAl:CCME17,Stafford:ACCME18,Gebhardt2019Robot},
and density estimation~\cite{SMKM20:CDO}.

\subsection{Kernel transfer operators}

Transfer operators such as the Perron--Frobenius operator~$ \pf $ and Koopman operator~$ \ko $ are frequently used for the analysis of the global dynamics of molecular dynamics and fluid dynamics problems but also for model reduction and control~\cite{KNKWKSN18}. Approximations of these operators in RHKSs are strongly related to the conditional mean embedding framework~\cite{KSM19}.
The kernel-based variants $ \pf[k] $ and $ \ko[k] $ are defined by
\begin{equation*}
    \pf[k] = \cov[XX]^{-1} \tsp \cov[YX]
    \quad \text{and} \quad
    \ko[k] = \cov[XX]^{-1} \tsp \cov[XY],
\end{equation*}
where we consider a (stochastic) dynamical system $X = (X_t)_{t \in T}$
and a time-lagged version of itself $Y = (X_{t + \tau})_{t \in T}$ for a fixed time lag $\tau$.
The empirical estimates of $ \pf[k] $ and $ \ko[k] $ are given by
\begin{equation*}
    \epf[k] = \Psi \tsp \gram[\Phi \Psi]^{-1} \tsp \gram[\Phi]^{-1} \tsp \gram[\Phi \Psi] \tsp \Phi^\top
    \quad \text{and} \quad
    \eko[k] = \Phi \tsp \gram[\Phi]^{-1} \tsp \Psi^\top.
\end{equation*}
Here, we use the feature matrices
\begin{equation*}
\Phi := [\phi(x_1), \dots, \phi(x_m)] \quad \text{and} \quad \Psi := [\phi(y_1), \dots, \phi(y_n)]
\end{equation*}
with data $x_i$ and $y_i = \Xi^\tau(x_i)$, where $\Xi^\tau$ denotes the flow map
associated with the dynamical system $X$ with time step $\tau$.
Note that in particular $\mathscr{H} = \mathscr{F}$.
Both operators $ \pf[k] $ and $ \ko[k] $ can be written as empirical RKHS operators, with $ B = \gram[\Phi \Psi]^{-1} \tsp \gram[\Phi]^{-1} \tsp \gram[\Phi \Psi] $ and $ B = \gram[\Phi]^{-1} $, respectively, where $ \gram[\Phi \Psi] = \Phi^\top \Psi $ is a time-lagged Gram matrix. Examples pertaining to the eigendecomposition of kernel transfer operators associated with molecular dynamics and fluid dynamics problems as well as text and video data can be found in \cite{KSM19}. The eigenfunctions and corresponding eigenvalues of kernel transfer operators contain information about the dominant slow dynamics and their implied time-scales. Moreover, the singular value decomposition of kernel transfer operators is known to be connected to \emph{kernel canonical correlation analysis}~\cite{MelzerEtAl:CCA} and the detection of coherent sets in dynamical systems~\cite{KHM19}.
In particular, the singular value decomposition of the operator 
\begin{equation*}
S := \ecov[YY]^{-1/2} \ecov[YX] \tsp \ecov[XX]^{-1/2}
\end{equation*}
solves the kernel CCA problem. This operator can be written
as
\begin{equation*}
    S = \Psi B \Phi^\top, 
\end{equation*}
where $B = \gram[\Psi]^{-1/2} \gram[\Phi]^{-1/2}$. For the derivation, see Appendix~\ref{app:empirical_cca}.
We will give an example in the context of coherent sets to illustrate potential applications.

\begin{example}
Let us consider the well-known periodically driven double gyre flow
\begin{align*}
    \dot{x}_1 &= -\pi A \sin(\pi f(x_1, t)) \cos(\pi x_2), \\
    \dot{x}_2 &= \phantom{-}\pi A \cos(\pi f(x_1, t)) \sin(\pi x_2) \frac{\partial f}{\partial x}(x_1, t),
\end{align*}
with $ f(y, t) = \delta \sin(\omega t) \tsp y^2 + (1 - 2 \delta \sin(\omega t)) \tsp y $ and parameters $ A = 0.25 $, $\delta = 0.25 $, and $ \omega = 2 \pi $, see \cite{FP14} for more details. We choose the lag time $ \tau = 10 $ and define the test points $ x_i $ to be the midpoints of a regular $ 120 \times 60 $ box discretization of the domain $ [0, 2] \times [0, 1] $. To obtain the corresponding data points $ y_i = \Xi^\tau(x_i) $, where $ \Xi^\tau $ denotes the flow map, we use a Runge--Kutta integrator with variable step size. We then apply the singular value decomposition to the operator described above using a Gaussian kernel with bandwidth $ \sigma = 0.25 $. The resulting right singular functions are shown in Figure~\ref{fig:Double Gyre}.

\begin{figure}[tbh]
    \centering
    \begin{minipage}{0.32\textwidth}
        \centering
        \subfiguretitle{(a) $ \sigma_1 = 0.99 $}
        \includegraphics[width=\textwidth]{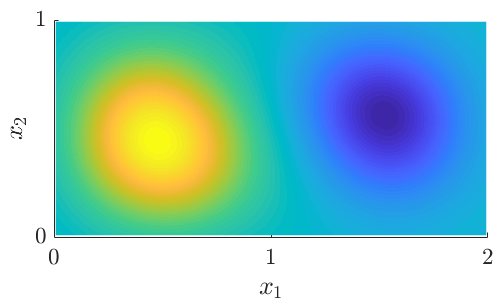} \\
    \end{minipage}
    \begin{minipage}{0.32\textwidth}
        \centering
        \subfiguretitle{(b) $ \sigma_2 = 0.98 $}
        \includegraphics[width=\textwidth]{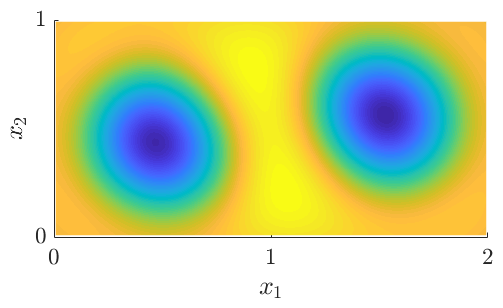} \\
    \end{minipage}
    \begin{minipage}{0.32\textwidth}
        \centering
        \subfiguretitle{(c) $ \sigma_3 = 0.94 $}
        \includegraphics[width=\textwidth]{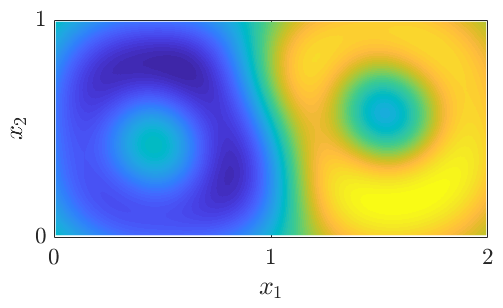} \\
    \end{minipage}
    \caption{Numerically computed singular values and right singular functions of $ \ecov[YY]^{-1/2} \ecov[YX] \tsp \ecov[XX]^{-1/2} $ associated with the double gyre flow.}
    \label{fig:Double Gyre}
\end{figure}

\end{example}

\section{Conclusion}
\label{sec:Conclusion}

We showed that the eigendecomposition and singular value decomposition of empirical RKHS operators can be obtained by solving associated matrix eigenvalue problems. To underline the practical importance and versatility of RKHS operators, we listed potential applications concerning kernel covariance operators, conditional mean embedding operators, and kernel transfer operators. While we provide the general mathematical theory for the spectral decomposition of RKHS operators, the interpretation of the resulting eigenfunctions or singular functions depends strongly on the problem setting. The eigenfunctions of kernel transfer operators, for instance, can be used to compute conformations of molecules, coherent patterns in fluid flows, slowly evolving structures in video data, or topic clusters in text data~\cite{KSM19}. Singular value decompositions of transfer operators might be advantageous for non-equilibrium dynamical systems. Furthermore, the decomposition of the aforementioned operators can be employed to compute low-rank approximations or their pseudoinverses, which might open up novel opportunities in statistics and machine learning. Future work includes analyzing connections to classical methods such as kernel PCA, regularizing finite-rank RKHS operators by truncating small singular values, solving RKHS operator regression problems with the aid of the pseudoinverse, and optimizing numerical schemes to compute the operator SVD by applying iterative schemes and symmetrization approaches.

\section*{Acknowledgements}

M.~M., S.~K., and C.~S were funded by Deutsche Forschungsgemeinschaft (DFG) through grant CRC 1114 (Scaling Cascades in Complex Systems, project ID: 235221301) and through Germany's Excellence Strategy (MATH\texttt{+}: The Berlin Mathematics Research Center, EXC-2046/1, project ID: 390685689). We would like to thank Ilja Klebanov for proofreading the manuscript and valuable suggestions for improvements.

\bibliographystyle{unsrt}
\bibliography{KTOSVD}

\appendix

\section{Appendix}

\subsection{Proof of block SVD}
\label{ap:block_operator_svd}

\begin{proof}[Lemma \ref{lem:block_operator_svd}]
  Let $A$ admit the SVD given in \eqref{eq:block_svd}. Then by the definition of $T$, we have
  \begin{equation*}
    T (\pm u_i , v_i) = (A v_i , A^* u_i) = \pm \sigma_i (\pm u_i, v_i)
  \end{equation*} for all $i \in I$.
  For any element $(f,h) \in \mspan\{(\pm u_i , v_i)\}_{i \in I}^{\perp}$, we can immediately deduce
  \begin{equation*}
    0 = \innerprod{(f,h)}{(\pm u_i,v_i)}_{\exsum} = \pm \innerprod{f}{u_i}_F + \innerprod{h}{v_i}_H
  \end{equation*}
  for all $i \in I$ and hence $f \in \mspan\{u_i \}_{i \in I}^\perp$ and  $h \in \mspan\{v_i \}_{i \in I}^\perp$. Using the SVD of $A$ in \eqref{eq:block_svd}, we therefore have
  \begin{equation*}
    \restr{T}{\mspan\{(\pm u_i , v_i)\}_{i \in I}^{\perp}} = 0.
  \end{equation*}
  It now remains to show that
  $\left\{ \tfrac{1}{\sqrt{2}} (\pm u_i,v_i) \right\}_{i \in I}$ is an orthonormal system in $F \exsum H$, which is clear since $\innerprod{(\pm u_i,v_i)}{( \pm u_j,v_j)}_{\exsum} = 2\,\delta_{ij}$ and $\innerprod{(-u_i,v_i)}{(u_j,v_j)}_{\exsum} = 0$ for all $i,j \in I$. Concluding, $T$ has the form~\eqref{eq:block_eigendecomposition} as claimed.
\end{proof}

\subsection{Derivation of the empirical CCA operator}
\label{app:empirical_cca}

The claim follows directly when we can show the identity
\begin{equation*}
    \Phi^\top (\Phi \Phi^\top)^{-1/2} = \gram[\Phi]^{-1/2} \Phi^\top
\end{equation*} and its analogue for the feature map $\Psi$.
Let $\gram[\Phi] = U \Lambda U^\top$ be the eigendecomposition
of the Gramian. We know that in this case we have the SVD of the operator
$\Phi \Phi^\top = \sum_{i \in I} \lambda_i (\lambda_i^{-1/2}\Phi u_i) \otimes (\lambda_i^{-1/2} \Phi u_i)$, since
\begin{equation*}
 \innerprod{\lambda_i^{-1/2}\Phi u_i}{\lambda_j^{-1/2}\Phi u_j}_\mathscr{H}
 = \lambda_i^{-1/2} u_i \gram[\Phi] u_j \lambda_j^{-1/2} = \delta_{ij}.
\end{equation*}
We will write this operator SVD for simplicity as $\Phi \Phi^\top = (\Phi U \Lambda^{-1/2}) \Lambda
(\Lambda^{-1/2} U \Phi^\top)$ with an abuse of notation.
Note that we can express the inverted operator square root elegantly in this form as $(\Phi \Phi^\top)^{-1/2} = (\Phi U \Lambda^{-1/2}) \Lambda^{-1/2}
(\Lambda^{-1/2} U \Phi^\top) = (\Phi U)  \Lambda^{-3/2} (U \Phi^\top) $.
Therefore, we immediately get
\begin{align*}
 \Phi^\top (\Phi \Phi^\top)^{-1/2}
 &= \Phi^\top (\Phi U \Lambda^{-3/2} U^\top \Phi^\top)\\
 &= \gram[\Phi] U \Lambda^{-3/2} U^\top \Phi^\top \\
 &= U \Lambda U^\top U \Lambda^{-3/2} U^\top \Phi^\top \\
 &= U \Lambda^{-1/2} U^\top \Phi^\top = \gram[\Phi]^{-1/2} \Phi^\top,
\end{align*} which proves the claim.
In the regularized case, all operations work the same with an additional
$\epsilon$-shift of the eigenvalues, i.e., the matrix $\Lambda$ is
replaced with the regularized version $\Lambda + \epsilon\mathrm{I}$.

\end{document}